\documentclass{article}
\usepackage[english]{babel}
\usepackage{geometry}
\usepackage{amsmath,amsthm,amssymb,enumerate,latexsym}

\usepackage[colorlinks=true,linkcolor=blue,citecolor=blue,urlcolor=black,pdfpagelabels=false]{hyperref}

\usepackage{thmtools}
\theoremstyle{plain}
  \declaretheorem[numberwithin=section]{theorem}
  \declaretheorem[numberlike=theorem]{corollary}
  
  \declaretheorem[numberlike=theorem]{lemma}

\theoremstyle{definition}
  
  \declaretheorem[numberlike=theorem]{example}
  \declaretheorem[numberlike=theorem]{remark}
\newenvironment{acknowledgements}{\bigskip\textbf{Acknowledgements.}}{}

\newcommand{\email}[1]{{\textit{Email:} \texttt{#1}}}

\newcommand{\assign}{:=}

% use (a) instead of 1 for lists (and references to list items)

\newenvironment{enumerateroman}{\begin{enumerate}[(i)] }{\end{enumerate}}

\begin{document}

\title{Refined counting of core partitions into $d$-distinct parts}

\author{
  Hannah E.~Burson
  \thanks{University of Illinois at Urbana-Champaign, \email{hburso2@illinois.edu}}
  \and
  Simone Sisneros-Thiry
  \thanks{University of Illinois at Urbana-Champaign, \email{thiry2@illinois.edu}}
  \and
  Armin Straub
  \thanks{University of South Alabama, \email{straub@southalabama.edu}}
}

\date{October 6, 2019}

\maketitle

\begin{abstract}
  Using a combinatorial bijection with certain abaci diagrams, Nath and
  Sellers have enumerated $(s, m s \pm 1)$-core partitions into distinct
  parts. We generalize their result in several directions by including the
  number of parts of these partitions, by considering $d$-distinct partitions,
  and by allowing more general $(s, m s \pm r)$-core partitions. As an
  application of our approach, we obtain the average and maximum number of
  parts of these core partitions.
\end{abstract}

\section{Introduction}

If $\lambda_1 \geq \lambda_2 \geq \ldots \geq \lambda_r
\geq 1$, where the parts $\lambda_i$ are integers, then $\lambda =
(\lambda_1, \lambda_2, \ldots, \lambda_r)$ is an (integer) partition of $|
\lambda | = \lambda_1 + \ldots + \lambda_r$. Its number of parts is $n
(\lambda) = r$. Excellent introductions to partitions include
\cite{andrews-part} and \cite{ae-partitions}. A partition is $d$-distinct
if its parts differ by at least $d$. In the case $d = 1$, these are precisely
partitions into distinct parts. The Young diagram of a partition $\lambda$ is
a left-justified array of square cells, where the first row contains
$\lambda_1$ cells, the second row $\lambda_2$ cells, and so on. The hook
assigned to each cell $u$ consists of the cell $u$ itself as well as all cells
below $u$ and to the right of $u$. The length of a hook is the number of cells
it contains. A partition $\lambda$ is said to be {\emph{$t$-core}} if there is
no hook in its Young diagram that has length $t$. For an introduction to this
notion we refer to, for instance, \cite{ahj-cores}. More generally,
$\lambda$ is said to be $(t_1, t_2, \ldots, t_m)$-core if $\lambda$ is
$t_i$-core for all $i \in \{ 1, 2, \ldots, m \}$.

In this paper, we are concerned with counting certain core partitions. This
line of inquiry has received increasing interest since Anderson
\cite{anderson-cores} proved that the number of $(s, t)$-core partitions is
\begin{equation}
  \frac{1}{s + t}  \binom{s + t}{s} \label{eq:anderson}
\end{equation}
if $s$ and $t$ are coprime (otherwise, there are infinitely many such
partitions). It remains an open problem to similarly enumerate $(s, t)$-core
partitions into distinct parts. Towards that problem, it was shown in
\cite{s-corepartitions} that $(s, m s - 1)$-core partitions into distinct
parts are counted by Fibonacci-like numbers. This count was generalized by
Nath and Sellers \cite{ns-abaci} to also include $(s, m s + 1)$-core
partitions. Following \cite{ns-abaci}, $(s, m s \pm 1)$-core partitions
refer to $(s, m s - 1)$-core or $(s, m s + 1)$-core partitions.

\begin{theorem}
  \label{thm:cores:ns}Let $m, s \geq 1$. The number $C_{s, m}^{\pm}$ of
  $(s, m s \pm 1)$-core partitions into distinct parts is characterized by
  \begin{equation*}
    C_{s, m}^{\pm} = C_{s - 1, m}^{\pm} + m C_{s - 2, m}^{\pm},
  \end{equation*}
  for $s \geq 3$, with the initial conditions $C_{1, m}^{\pm} = 1$,
  $C_{2, m}^- = m$ and $C_{2, m}^+ = m + 1$. 
\end{theorem}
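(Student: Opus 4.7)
The plan is to model the partitions in question via the $s$-abacus. Every partition $\lambda$ is encoded by its beta-set (the set of first-column hook lengths), and distributing the beta-set across $s$ runners on an abacus makes $\lambda$ an $s$-core exactly when each runner is flush, i.e.\ the beads on each runner occupy an initial segment. After normalizing so that runner $0$ carries no beads, an $s$-core is therefore encoded by a tuple $(a_1,\dots,a_{s-1})\in\mathbb{Z}_{\ge 0}^{s-1}$, where $a_i$ is the number of beads on runner $i$.

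\textbf{Key translations.} First, I would translate the $(ms\pm1)$-core condition into a constraint on $(a_1,\dots,a_{s-1})$: a hook of length $ms\pm1$ in $\lambda$ corresponds to a bead/gap pair on the abacus whose labels differ by $ms\pm1$, and once the $s$-core condition has flushed each runner, this reduces to an arithmetic inequality between the $a_i$ on cyclically adjacent runners. Carrying out the arithmetic should yield bounds of the form $0\le a_i\le m$ together with a boundary relation whose form depends on the sign. Second, the distinct-parts condition, via $\lambda_j=\beta_j-(r-j)$, forbids two beads at consecutive beta-positions; on the abacus this is a local condition comparing beads on runner $i$ with beads on runner $i+1$, and translates into a monotonicity-type inequality such as $a_{i+1}\le a_i$ (possibly with a shift). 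Once both translations are in place, $C^{\pm}_{s,m}$ is the number of tuples $(a_1,\dots,a_{s-1})$ with the box constraints $0\le a_i\le m$ and the local constraints between consecutive entries.

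\textbf{Obtaining the recurrence.} With this set-theoretic description, the recurrence falls out by conditioning on the last coordinate $a_{s-1}$. If $a_{s-1}=0$, then $(a_1,\dots,a_{s-2})$ is a valid configuration for $(s-1,m(s-1)\pm1)$-cores with distinct parts, contributing $C^{\pm}_{s-1,m}$. If $a_{s-1}\ge 1$, the monotonicity constraint forces $a_{s-2}$ either to a unique value or to a small set of values that collapses; meanwhile, $a_{s-1}$ itself has $m$ admissible positive values, so the earlier entries $(a_1,\dots,a_{s-3})$ contribute $C^{\pm}_{s-2,m}$ and the total count is $m\cdot C^{\pm}_{s-2,m}$. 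Summing the two cases gives the stated recurrence. The initial conditions $C^{\pm}_{1,m}=1$ and the two values of $C^{\pm}_{2,m}$ are then a matter of direct enumeration of $a_1\in\{0,1,\dots\}$ subject to the $\pm$-specific box constraint.

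\textbf{Main obstacle.} The principal difficulty is the careful execution of the two translation steps. Hook lengths are a global feature of the Young diagram, so reducing the $(ms\pm 1)$-core condition to a clean \emph{local} inequality between adjacent abacus coordinates requires precise bookkeeping, particularly in the $+$ case, where the wraparound on the cyclic abacus interacts with the boundary runner in a way that differs from the $-$ case. This asymmetry is exactly what produces the different initial values $C^{-}_{2,m}=m$ and $C^{+}_{2,m}=m+1$, so getting that boundary behavior right is the crux of the argument; the rest of the proof is bookkeeping around the recurrence.
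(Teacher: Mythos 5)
Your overall architecture -- encode an $s$-core by the tuple of runner heights $(a_1,\dots,a_{s-1})$ (runner $0$ empty, runners flush), translate the second core condition and the distinctness condition into local constraints, and peel off a column to get the recurrence -- is exactly the strategy of the paper (and of Nath--Sellers). However, two of the steps you defer as ``bookkeeping'' are precisely where the content lies, and as you have guessed them they are wrong. First, the distinct-parts condition is \emph{not} a monotonicity inequality $a_{i+1}\le a_i$. Distinctness means no two beads occupy consecutive positions; with flush runners and runner $0$ empty this says that no two \emph{adjacent runners are both nonempty}. Your recurrence step (``$a_{s-1}\ge 1$ forces $a_{s-2}$ to a unique value'') only works under this correct condition, where the unique value is $a_{s-2}=0$; under monotonicity it fails outright (for $s=4$, $m=1$ monotone tuples give $4$, not $C^+_{4,1}=5$). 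The second core condition then reduces, via the maximum-hook bound (Lemma~\ref{lem:cores:maxhook} of the paper), to the box constraints $a_j\le m$ for all $j$ in the $+$ case, and $a_j\le m$ for $j\le s-2$ with $a_{s-1}\le m-1$ in the $-$ case.

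Second, and more seriously, your derivation of the recurrence by conditioning on the \emph{last} coordinate does not close in the $-$ case. When $a_{s-1}=0$, the remaining tuple $(a_1,\dots,a_{s-2})$ satisfies $a_j\le m$ for \emph{all} $j$ including $j=s-2$, so it belongs to the $+$ family for $s-1$ columns, not the $-$ family; what you obtain is the mixed relation $C^-_{s,m}=C^+_{s-1,m}+(m-1)C^+_{s-2,m}$ (this is exactly equation~\eqref{eq:A:ns:minus} of the paper at $q=1$), from which the stated recurrence for $C^-$ still requires an additional argument. Nath and Sellers close this gap with generating functions; the paper does it combinatorially (Lemma~\ref{lem:A:rneg:rec}, Theorem~\ref{thm:abaci:rec:neg}, Lemma~\ref{lem:abaci:rec:neg:d1}). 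Your sketch is salvageable for $d=1$ by conditioning on the \emph{first} coordinate instead (if $a_1=0$, the tuple $(a_2,\dots,a_{s-1})$ carries exactly the $(s-1)$-column constraints of the same sign; if $a_1\ge 1$ there are $m$ choices and $a_2=0$), but note that this trick is special to $d=1$: the paper points out that removing initial columns does not preserve the spacing condition for $d\ge 2$, which is why its general argument removes trailing columns and treats negative $r$ separately.
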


In the case $m = 1$, the numbers $C_{s, 1}^- = F_s$ are the familiar Fibonacci
numbers. This special case was predicted by Amdeberhan
\cite{amdeberhan-conj} and also independently proved by Xiong
\cite{xiong-cores}.

Nath and Sellers \cite{ns-abaci} proved (the case of $(s, m s + 1)$-core
partitions of) Theorem~\ref{thm:cores:ns} combinatorially by viewing the
partitions as certain abaci. This correspondence will be introduced in
Section~\ref{sec:abaci}. Building on their combinatorial approach, our main
result extends Theorem~\ref{thm:cores:ns} in various directions:
\begin{enumerate}
  \item we keep track of the number of parts of the involved partitions,
  
  \item we consider partitions into $d$-distinct parts (which, in the case $d
  = 1$, are partitions into distinct parts),
  
  \item we allow $(s, m s + r)$-core partitions for certain integers $r$, and
  
  \item we consider $s$-core partitions with maximum hook length $m s + r$ for
  all $m, r \in \mathbb{Z}$.
\end{enumerate}
To that end, let $\mathcal{C}_{s, m, r}^d$ be the set of $(s, m s + r)$-core
partitions into $d$-distinct parts. We denote with
\begin{equation}
  \boldsymbol{C}_{s, m, r}^d (q) = \sum_{\lambda \in \mathcal{C}_{s, m, r}^d}
  q^{n (\lambda)} \label{eq:C:def}
\end{equation}
the generating polynomial for the number of parts of the partitions in
$\mathcal{C}_{s, m, r}^d$. If $d$ is omitted in the notation, it is implicit
that $d = 1$. Observe that $C_{s, m}^{\pm} =\boldsymbol{C}_{s, m, \pm 1} (1)$.

\begin{theorem}
  \label{thm:cores:main}Let $d, m, r \geq 1$. Write $f_s (q)
  =\boldsymbol{C}_{s, m, r}^d (q)$. If $s > d + 1$ and $r \leq d$, then
  \begin{equation*}
    f_s (q) = f_{s - 1} (q) + (q + q^2 + \ldots + q^m) f_{s - d - 1} (q) .
  \end{equation*}
  If $r = - 1$, then the same conclusion holds provided that $s > 2$, if $d =
  1$, and $s > 2 d + 1$, if $d > 1$.
\end{theorem}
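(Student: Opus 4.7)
The plan is to work in the abacus framework introduced in Section~\ref{sec:abaci}, under which each $\lambda \in \mathcal{C}_{s,m,r}^d$ corresponds to a configuration of beads on an $s$-runner abacus. The $s$-core condition fixes the usual abacus structure, the additional $(ms+r)$-core condition becomes a bound on the bead positions along the number line, the $d$-distinct condition on parts translates to a minimum spacing of $d$ between consecutive beads on the number line, and the statistic $n(\lambda)$ corresponds to the number of beads above the canonical base line. With this dictionary, $f_s(q) = \boldsymbol{C}_{s,m,r}^d(q)$ is the generating polynomial of such abaci by bead count.

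The central step is to condition on what happens in the last runner (runner $s-1$, say). Partitions in $\mathcal{C}_{s,m,r}^d$ whose associated abacus has no bead on this runner are in bijection, by deletion of the runner, with abaci corresponding to $\mathcal{C}_{s-1,m,r}^d$, and this preserves the bead count, contributing exactly $f_{s-1}(q)$. Now suppose there is at least one bead on runner $s-1$. The $(s,ms+r)$-core bound restricts such a bead to one of $m$ admissible positions (the $m$ "levels" permitted before the hook-length bound cuts in); each choice contributes a factor of $q$ (one added part), summing to $q + q^2 + \cdots + q^m$. The $d$-distinct condition then forces the $d-1$ runners immediately preceding runner $s-1$ (in the number-line reading order) to be empty in the corresponding range, so these $d-1$ runners together with runner $s-1$ can be deleted, yielding a bijection with abaci counted by $f_{s-d-1}(q)$. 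Combining the two cases gives the stated recurrence.

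The hypothesis $s > d+1$ is exactly what is needed for the second case to make sense (so that $s-d-1 \geq 1$ runners remain), and $r \leq d$ ensures that the hook-length bound $ms+r$ does not interact with the base of the abacus in a way that breaks the clean decomposition; the $m$ possible positions for the extra bead then appear uniformly. For $r = -1$, a small shift in the hook-length bound forces a separate check of the smallest few values of $s$, which is why the threshold rises to $s > 2d+1$ when $d > 1$; when $d = 1$ this collapses to $s > 2$, in line with Theorem~\ref{thm:cores:ns}.

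The main obstacle I expect is the bookkeeping in the second case: one must verify that, after restricting to abaci with a bead on the last runner, the $m$ admissible heights really do each contribute exactly one part (so that the generating polynomial over heights is precisely $q + q^2 + \cdots + q^m$, not some twisted variant), and that the deletion of $d$ runners produces an abacus satisfying exactly the $(s-d-1, m(s-d-1)+r)$-core and $d$-distinctness conditions rather than a shifted version. Handling the $r = -1$ asymmetry at the base of the abacus, which is responsible for the different threshold, will require examining the boundary configurations directly.
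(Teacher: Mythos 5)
Your overall strategy (pass to $s$-core abaci via Lemma~\ref{lem:cores:abaci:smallr}, then recurse by deleting columns) is the paper's strategy, but the decomposition you describe does not work as stated, for three concrete reasons. First, your case split is wrong: splitting on whether the last runner carries a bead is not a clean dichotomy, because deleting an empty last column shifts the second row down by one position relative to the first row and hence shrinks the gap $g$ between the last bead of row $0$ and the first bead of row $1$ by one. If that gap is already at its minimum ($g=d$ spacers), deletion destroys the $d$-spacing, so the empty-last-column abaci with $g=d$ cannot be sent to $f_{s-1}(q)$; the paper's first case is ``$g>d$ \emph{and} last column empty,'' and the leftover configurations with empty last column but $g=d$ must be absorbed into the second case. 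Second, your second case has an off-by-one error and a misplaced non-empty column: deleting runner $s-1$ together with the $d-1$ preceding runners removes only $d$ columns and would give $f_{s-d}(q)$; the correct operation removes the last $d+1$ columns, exactly one of which is non-empty (and that column is \emph{not} forced to be the last one---its location is determined by the reconstruction data, namely the number $h$ of leading spacers in row $1$ of the reduced abacus, which is what makes the correspondence $m$-to-$1$ with weight $q+\cdots+q^m$). Third, you omit entirely the delicate subcase in which the unique non-empty column among the last $d+1$ contains $m+1$ beads (last bead in the $(m+1)$st row); this genuinely occurs when $s\le r+d+1$, e.g.\ $s=d+2$, $r=d$, and requires the paper's separate surjection (add beads to the first non-empty column to bring it to $m+2$, then delete the last $d+1$ columns).

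The $r=-1$ half of the theorem is also not a matter of ``checking a few small $s$.'' When $r<0$ the top row must \emph{avoid} the last $|r|$ positions rather than be confined to the first $r$, and the column-deletion argument does not transfer directly; the paper instead proves Theorem~\ref{thm:abaci:rec:neg} by an induction on $r$ using the identity $\boldsymbol{A}_{t,m,-r}^d(q)=\boldsymbol{A}_{t,m-1,t-r}^d(q)$ to reduce to the positive-$r$ recurrence, and then needs two further lemmas (Lemma~\ref{lem:abaci:rec:neg:r1} for the boundary case $s=2d+2$ and Lemma~\ref{lem:abaci:rec:neg:d1} for $d=1$) to reach the thresholds $s>2d+1$ and $s>2$ stated in the theorem. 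As written, your proposal supplies neither the correct bijections for $r\ge 1$ nor any actual argument for $r=-1$.
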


We provide initial conditions, which specialize to the ones in
Theorem~\ref{thm:cores:ns}, in Lemma~\ref{lem:abaci:initial}. In light of
Lemma~\ref{lem:cores:abaci:smallr}, we prove Theorem~\ref{thm:cores:main} as a
special case of our Theorems~\ref{thm:abaci:rec} and \ref{thm:abaci:rec:neg}
(as well as Lemma~\ref{lem:abaci:rec:neg:r1}, in the case $r = - 1$ and $s = 2
d + 2$, and Lemma~\ref{lem:abaci:rec:neg:d1} in the case $r = - 1$ and $d =
1$) on certain abaci. In fact, it follows from Theorem~\ref{thm:abaci:rec}
that the condition $r \leq d$ can be dropped if $\boldsymbol{C}_{s, m, r}^d
(q)$ is modified to be the generating polynomial of parts of $s$-core
partitions into $d$-distinct parts with largest hook of length less than $m s
+ r$. Additionally, in Theorem~\ref{thm:abaci:rec:neg}, we provide an
extension to negative values $r < - 1$.

Theorem~\ref{thm:cores:main} unifies several results in the recent literature:
as indicated, the special cases $(d, r, q) = (1, \pm 1, 1)$ result in
Theorem~\ref{thm:cores:ns} by Straub \cite{s-corepartitions} and Nath and
Sellers \cite{ns-abaci}. More generally, the case $(d, r) = (1, 1)$ has been
established by Paramonov \cite{paramonov-core}, who notes that the approach
applies to the case $(d, r) = (1, - 1)$ as well. In the cases $(m, q) = (1,
1)$, we obtain results on $d$-distinct partitions by Sahin
\cite{sahin-cores}, which were generalized by Kravitz \cite{kravitz-cores}
who also considers the case $m > 1$.

In Theorems~\ref{thm:abaci:rec} and \ref{thm:abaci:rec:neg}, we show that the
enumeration in Theorem~\ref{thm:cores:main} extends naturally to $s$-core
abaci of bounded height. In Section~\ref{sec:abaci}, we review the
combinatorial correspondence between core partitions and abaci, and prove
several preliminary results. Let us note, for instance, that Nath and Sellers
\cite{ns-abaci} prove the case $(d, r, q) = (1, 1, 1)$ in a combinatorial
manner and then use an algebraic argument based on generating functions to
deduce the case $(d, r, q) = (1, - 1, 1)$. At the end of
Section~\ref{sec:abaci}, we demonstrate that this algebraic argument can be
replaced (and naturally generalized) by a combinatorial observation on abaci.

Section~\ref{sec:proof} is concerned with enumerating $s$-core abaci of
bounded height. In particular, by looking at $s$-core abaci of bounded height,
we provide a proof of Theorem~\ref{thm:cores:main}. We include applications of
our approach in Section~\ref{sec:max} by determining the maximum number of
parts of certain core partitions and by enumerating core partitions with
maximal initial gaps. As another application, we indicate in
Section~\ref{sec:avg} how to determine the average number of parts (and,
likewise, higher moments) of the core partitions studied here.

\section{Preliminaries and $s$-core abaci}\label{sec:abaci}

In this section, we introduce abaci and discuss their relation to core
partitions. It is then shown in the next section that
Theorem~\ref{thm:cores:main} on enumerating $(s, m s + r)$-core partitions is
an instance of, and more naturally expressed as, a more general result on
$s$-core abaci of bounded height.

An $s$-abacus is an array consisting of $s$ columns, labelled $0, 1, 2,
\ldots, s - 1$, and some number of rows, labelled $0, 1, 2, \ldots$, where
each entry is either occupied by a bead or unoccupied (a spacer). The entry in
row $i$ and column $j$ is said to be in position $i s + j$. Placing a set of
nonnegative integers $\{ a_1, a_2, \ldots \}$ on an $s$-abacus means to
construct the $s$-abacus with beads precisely in the positions $a_1, a_2,
\ldots$. The $s$-abacus corresponding to a partition $\lambda$ is obtained by
placing the set of its first column hook lengths on an $s$-abacus.
Consequently, the number of parts of $\lambda$ equals the number of beads of
the corresponding abacus $A$. We write $n (A)$ for the number of beads in $A$.
We say that an $s$-abacus $A$ has spacing $d$ if it corresponds to a partition
with parts that differ by at least $d$. Equivalently, the positions of beads
in $A$ differ by more than $d$.

In the sequel, an $s$-core abacus refers to an $s$-abacus corresponding to an
$s$-core partition. By \cite[Lemma~7]{ns-abaci}, which is equivalent to
\cite[Lemma~2.1]{xiong-cores}, these are characterized as follows.

\begin{lemma}[{\cite[Lemma~7]{ns-abaci}}]
  \label{lem:coreabacus}An $s$-abacus corresponds to an $s$-core partition if
  and only if the first column is empty and no spacers occur below a bead.
\end{lemma}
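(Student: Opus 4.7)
The plan is to deduce the characterization directly from the classical description of hook lengths in terms of first column hook lengths. Write $\beta = \{\beta_1, \ldots, \beta_r\}$ for the first column hook lengths of $\lambda$, so that the beads of the corresponding abacus $A$ sit precisely at the positions in $\beta$. A standard fact then expresses the multiset of all hook lengths of $\lambda$ as $\{\beta_i - k : 1 \leq i \leq r,\ 0 \leq k < \beta_i,\ k \notin \beta\}$. Consequently, $\lambda$ has a hook of length $s$ if and only if some $\beta_i \in \beta$ satisfies $\beta_i \geq s$ and $\beta_i - s \notin \beta$, which on the abacus is the assertion that some bead at a position $p \geq s$ has a spacer at position $p - s$.

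Since positions $p$ and $p - s$ lie in the same column and in consecutive rows (with $p - s$ lying below $p$ in the convention used throughout the paper), the $s$-core property is equivalent to saying that no bead has a spacer immediately below it in its column. A short descent argument shows that this adjacent-row statement is equivalent to the ostensibly global condition that no spacer occurs anywhere below a bead.

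It remains to check that the first column must be empty: position $0$ is never a bead because every first column hook length is at least $1$, so a bead at any position $ks$ with $k \geq 1$ in column $0$ would, by iterating the ``no spacer below a bead'' condition, force a bead at position $0$, a contradiction. I expect the only things to be careful about are the convention for ``below'' on the abacus and the passage from the adjacent-row to the global same-column condition; neither step is conceptually difficult, and the main content of the proof is the translation through the hook-length formula in the first paragraph.
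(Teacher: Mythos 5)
The paper does not prove this lemma at all --- it is quoted from Nath--Sellers \cite[Lemma~7]{ns-abaci} (equivalently \cite[Lemma~2.1]{xiong-cores}) --- so there is no in-paper argument to compare against. Your proof is correct and is essentially the standard one from those references: the identification of the hook lengths of $\lambda$ with $\{\beta_i - k : k < \beta_i,\ k \notin \beta\}$ is the classical beta-set fact, the translation to ``a bead at position $p \geq s$ forces a bead at $p-s$'' is exactly right, and the descent argument plus the observation that position $0$ is never occupied correctly accounts for both the global ``no spacer below a bead'' condition and the emptiness of the first column.
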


\begin{example}
  The partition $\lambda = (7, 5, 4, 2, 1)$ has first column hook lengths $\{
  1, 3, 6, 8, 11 \}$. Its $5$-abacus and $7$-abacus are:
  \begin{equation*}
    \begin{array}{|c|c|c|c|c|}
       \hline
       % & \bullet &  &  & \\
       \phantom{\bullet} & \bullet & \phantom{\bullet} & \phantom{\bullet} & \phantom{\bullet} \\
       \hline
       & \bullet &  & \bullet & \\
       \hline
       & \bullet &  & \bullet & \\
       \hline
     \end{array}, \quad
     \begin{array}{|c|c|c|c|c|c|c|}
       \hline
       % & \bullet &  &  & \bullet &  & \\
       \phantom{\bullet} & \bullet & \phantom{\bullet} & \phantom{\bullet} & \bullet & \phantom{\bullet} & \phantom{\bullet} \\
       \hline
       & \bullet &  & \bullet &  &  & \bullet\\
       \hline
     \end{array} .
  \end{equation*}
  In light of Lemma~\ref{lem:coreabacus}, we see that $\lambda$ is not a
  $7$-core partition (because there is one bead with a spacer below it) but is
  a $5$-core partition (because the first column is empty and no spacers occur
  below a bead). Because $\lambda$ has distinct parts, both abaci have spacing
  $1$.
\end{example}

Let $\mathcal{A}_{s, m}^d$ be the set of $s$-core abaci with (at most) $m$
rows and spacing $d$ (throughout, $m \geq 1$ and $d \geq 1$). More
generally, for integers $r$ (possibly negative), let $\mathcal{A}_{s, m, r}^d$
denote the set of $s$-core abaci with spacing $d$ such that the maximum position
is (strictly) less than $m s + r$. Note that, by definition, $\mathcal{A}_{s, m}^d
=\mathcal{A}_{s, m, 0}^d$. (Since the first column of an $s$-core abacus is
necessarily empty, we also have $\mathcal{A}_{s, m}^d =\mathcal{A}_{s, m,
1}^d$.) In the sequel, if $d = 1$, we may omit $d$ in the notation. That is,
$\mathcal{A}_{s, m} \assign \mathcal{A}_{s, m}^1$ and $\mathcal{A}_{s, m, r}
\assign \mathcal{A}_{s, m, r}^1$.

Recall that $n (A)$ is the number of beads in the abacus $A$. Let
\begin{equation}
  \boldsymbol{A}_{s, m, r}^d (q) = \sum_{A \in \mathcal{A}_{s, m, r}^d} q^{n
  (A)} \label{eq:A:def}
\end{equation}
be the generating polynomial for the number of beads in the abaci in
$\mathcal{A}_{s, m, r}^d$. As for $\mathcal{A}_{s, m, r}^d$, we drop $d$ and
$r$ from the notation if $d = 1$ or $r = 0$, respectively. Note that
$\boldsymbol{A}_{0, m, r}^d (q) = 1$, representing the empty abacus. Since it is
natural and convenient for certain results, we set $\boldsymbol{A}_{s, m, r}^d
(q) = 0$ for $s < 0$.

Using the correspondence with abaci we can easily show the following result,
which, in the case $d = 1$, was proved in \cite[Lemma~2.2]{s-corepartitions}
and \cite[Lemma~3.2]{xiong-cores} and which is equivalent to
\cite[Lemma~2.5]{kravitz-cores}.

\begin{lemma}
  \label{lem:cores:maxhook}If $1 \leq r \leq d$, then every $(s, s +
  r)$-core partition into $d$-distinct parts has maximum hook length $< s +
  r$.
\end{lemma}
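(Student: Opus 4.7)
The plan is to work directly with the set $L$ of first-column hook lengths of $\lambda$, which coincides with the set of bead positions in both the $s$-abacus and the $(s+r)$-abacus (only the row/column layout differs between the two). The maximum hook length of $\lambda$ is attained by the top-left cell, and this equals $h := \max L$ when $\lambda$ is nonempty (the empty case being vacuous). My goal is to show $h < s+r$, so I will argue by contradiction, assuming $h \geq s+r$.

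Under this assumption, Lemma~\ref{lem:coreabacus} applied to the $s$-abacus (in which $h$ is a bead and $h \geq s$) forces position $h-s$ to be a bead as well, since no spacer may sit below a bead. Likewise, applying the lemma to the $(s+r)$-abacus (in which $h$ is a bead and $h \geq s+r$) forces a bead at position $h-(s+r)$. Both positions are nonnegative, and they are distinct elements of $L$ because their difference is exactly $r \geq 1$.

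Now I invoke the $d$-distinct condition on $\lambda$, which translates to the requirement that any two elements of $L$ differ by more than $d$, i.e., by at least $d+1$. But the two bead positions $h-s$ and $h-(s+r)$ differ by exactly $r$, and by hypothesis $r \leq d$. This contradicts the spacing constraint, and therefore $h < s+r$. I do not foresee a real obstacle here; the proof is essentially a one-line contradiction, exploiting the observation that combining the two core conditions at the maximum hook manufactures two first-column hook lengths whose gap $r$ is too small to be compatible with the $d$-distinct spacing. The only minor bookkeeping is the trivial case of the empty partition.
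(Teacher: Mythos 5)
Your proof is correct and is essentially identical to the paper's: both assume a bead in position $\geq s+r$, use Lemma~\ref{lem:coreabacus} on the $s$- and $(s+r)$-abaci to force beads at positions $x-s$ and $x-(s+r)$, and derive a contradiction with the $d$-distinct spacing since these differ by $r \leq d$. The only cosmetic difference is that you take $x$ to be the maximum hook length rather than an arbitrary bead position $\geq s+r$, which changes nothing.
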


\begin{proof}
  Let $A$ be the $s$-abacus of an $(s, s + r)$-core partition $\lambda$ into
  $d$-distinct parts. We need to show that all beads of $A$ (recall that these
  correspond to the first column hook lengths of $\lambda$) are in positions
  with labels $< s + r$. Assume for the sake of contradiction that $A$ has a
  bead in position $x \geq s + r$. By Lemma~\ref{lem:coreabacus}, because
  $\lambda$ is an $s$-core partition, there must also be a bead in position $x
  - s$. Likewise, because $\lambda$ is a $(s + r)$-core partition, there must
  be a bead in position $x - (s + r)$. However, $| (x - s) - (x - (s + r)) | =
  r \leq d$, which contradicts the requirement that positions of beads in
  $A$ differ by more than $d$.
\end{proof}

By construction, the abaci in $\mathcal{A}_{s, m, r}^d$ correspond to $s$-core
partitions into $d$-distinct parts with largest hook $< m s + r$.
Equivalently, these are $(s, m s + r, m s + r + 1)$-core partitions into
$d$-distinct parts (note that, by Lemma~\ref{lem:cores:maxhook} applied with
$d = r = 1$, a $(t, t + 1)$-core partition into distinct parts has largest
hook less than $t$). If $r \leq d$, this simplifies as follows.

\begin{lemma}
  \label{lem:cores:abaci:smallr}If $1 \leq r \leq d$ or if $r = -
  1$, then the abaci in $\mathcal{A}_{s, m, r}^d$ are in $1$-$1$
  correspondence with $(s, m s + r)$-core partitions into $d$-distinct parts.
\end{lemma}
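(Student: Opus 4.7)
The plan is to show that $\mathcal{A}_{s,m,r}^d$ and the set of $(s,ms+r)$-core partitions into $d$-distinct parts correspond via the usual abacus assignment. By the paragraph preceding the lemma, it suffices to verify that an $s$-core partition into $d$-distinct parts is $(ms+r)$-core if and only if its largest hook has length strictly less than $ms+r$. The ``if'' direction is immediate, since a partition whose largest hook is shorter than $ms+r$ cannot have any hook of length $ms+r$. For the converse I would generalize the argument of Lemma~\ref{lem:cores:maxhook} by tracking beads further down the $s$-abacus.

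For $1\leq r\leq d$, suppose $\lambda$ is an $(s,ms+r)$-core partition into $d$-distinct parts whose abacus $A$ contains a bead at some position $x\geq ms+r$. Iterating the $s$-core half of Lemma~\ref{lem:coreabacus} exactly $m$ times produces a bead at $x-ms\geq r\geq 1$. One further application of Lemma~\ref{lem:coreabacus}, this time using $(ms+r)$-coreness, produces a bead at $x-(ms+r)$; this position is at least $1$, since otherwise it would lie in the first column, which must be empty. These two beads differ by exactly $r\leq d$, contradicting the spacing condition defining $d$-distinctness.

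For $r=-1$, the boundary behavior forces a small case split of the assumption that $A$ has a bead at position $x\geq ms-1$. If $x=ms-1$, then $(ms-1)$-coreness forces a bead at position $0$, which is impossible. If $x=ms$, iterating $s$-coreness forces a bead at position $0$, again impossible. Otherwise $x\geq ms+1$, and then iterating $s$-coreness yields a bead at $x-ms\geq 1$, while $(ms-1)$-coreness yields a bead at $x-(ms-1)=(x-ms)+1$; these two beads differ by $1$, contradicting $d$-distinctness for every $d\geq 1$. The only real obstacle in the proof is this bookkeeping at the boundary of the first column in the $r=-1$ case; the rest is a direct extension of Lemma~\ref{lem:cores:maxhook}.
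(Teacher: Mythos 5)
Your proof is correct and follows essentially the same route as the paper: the ``if'' direction is the trivial observation about hook lengths, and the ``only if'' direction is the bead-tracking argument of Lemma~\ref{lem:cores:maxhook} run with $ms$ in place of $s$. The only difference is that the paper simply cites Lemma~\ref{lem:cores:maxhook} (for $r=-1$ via the observation that the partition is $(ms-1,ms)$-core), whereas you re-derive the argument inline with an explicit case split at the boundary of the first column; both handle the $r=-1$ endpoint correctly.
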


\begin{proof}
  We need to show that $(s, m s + r)$-core partitions into $d$-distinct parts
  have maximum hook length $< m s + r$. In the case $1 \leq r \leq
  d$, applying Lemma~\ref{lem:cores:maxhook} with $m s$ in place of $s$, we
  find that every $(s, m s + r)$-core partition into $d$-distinct parts indeed
  has maximum hook length $< m s + r$.
  
  Finally, consider the case $r < 0$. Note that every $(s, m s + r)$-core
  partition is $(m s + r, m s)$-core. Hence, if $d \geq | r |$, then
  Lemma~\ref{lem:cores:maxhook} shows that every $(s, m s + r)$-core partition
  into $d$-distinct parts has maximum hook length $< m s$. If $r = - 1$, it
  follows that the maximum hook length is $< m s - 1$.
\end{proof}

\begin{example}
  Lemma~\ref{lem:cores:abaci:smallr} does not generally hold for other values
  of $r$. For instance, consider the case $r = - 2$ and $(s, m) = (6, 1)$. The
  partition $\lambda = (4, 1)$ is $(s, m s + r) = (6, 4)$-core and
  $3$-distinct. However, its maximum hook length is $5 \nless m s + r = 4$.
\end{example}

\begin{lemma}
  \label{lem:removecols}Let $s > d + 1$. Removing the last $d + 1$ columns,
  not all empty, from an $s$-core abacus with spacing $d$ results in an $(s -
  d - 1)$-core abacus with spacing $d$.
\end{lemma}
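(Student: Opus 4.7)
The plan is to pass to bead positions and verify the two required properties for $A'$ in turn. Write $B$ for the bead set of $A$, let $s' \assign s - d - 1$ be the number of columns of $A'$, and write $B'$ for the bead set of $A'$; the surviving beads are those at positions $r s + c \in B$ with $0 \leq c \leq s' - 1$, and each such bead sits at position $r s' + c$ in $B'$. The $(s')$-core property for $A'$ is then an immediate consequence of the $s$-core property of $A$: column $0$ of $A'$ coincides with column $0$ of $A$ and is empty, and given $p' = r s' + c \in B'$ with $p' \geq s'$ (so $r \geq 1$), the corresponding $p = r s + c \in B$ satisfies $p \geq s$, so by Lemma~\ref{lem:coreabacus} the bead $p - s = (r - 1) s + c$ lies in $B$, producing the required bead at position $p' - s'$ in $B'$.

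The substantive part is the spacing condition in $A'$, which I would prove by contradiction. Suppose $p'_1 < p'_2$ are distinct beads in $B'$ with $p'_2 - p'_1 \leq d$, arising from $p_i = r_i s + c_i \in B$ with $c_i \leq s' - 1$; since column $0$ of $A$ is empty one in fact has $1 \leq c_i \leq s' - 1$. The case $r_1 = r_2$ is immediate from the spacing of $A$, so $k \assign r_2 - r_1 \geq 1$, and a short computation gives $c_1 - c_2 \geq k s' - d \geq s - 2 d - 1$. Iterating the $s$-core property of $A$ on $p_2$ also places $(r_1, c_2) \in B$; together with spacing in row $r_1$ this rules out $c_1 < c_2$, so $c_1 \geq c_2$. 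Now the hypothesis that the last $d + 1$ columns of $A$ are not all empty, combined with the ``no spacers below a bead'' part of Lemma~\ref{lem:coreabacus}, produces a bead $(0, c_0) \in B$ with $s - d - 1 \leq c_0 \leq s - 1$, and the same lemma places $(0, c_1)$ and $(1, c_2)$ in $B$. Applying the spacing condition of $A$ to the pairs $\{(0, c_1), (0, c_0)\}$ and $\{(1, c_2), (0, c_0)\}$ now forces $c_1 + d + 1 \leq c_0 \leq s + c_2 - d - 1$, hence $c_1 - c_2 \leq s - 2 d - 2$, contradicting the earlier $c_1 - c_2 \geq s - 2 d - 1$.

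I expect the main obstacle to be this last step, and the reason the ``not all empty'' hypothesis is essential. The map $r s + c \mapsto r s' + c$ compresses positions by $d + 1$ for every increase in row, so two beads that are well-separated in $A$ can genuinely get as close as $s'$ apart in $A'$: for example, the beads $\{1, s + 1\}$ on an $s$-abacus with $s \leq 2 d + 1$ differ by $s > d$ in $A$ but land at positions $\{1, s' + 1\}$ differing by $s' \leq d$ in $A'$. The hypothesis is exactly what provides the witness bead $(0, c_0)$ needed to squeeze the pair of inequalities above and rule out such compressions.
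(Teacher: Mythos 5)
Your proof is correct. It rests on the same key observation as the paper's: the ``not all empty'' hypothesis, via Lemma~\ref{lem:coreabacus}, supplies a bead in row $0$ of one of the removed columns, and the two spacing gaps flanking that bead (each of size at least $d$) are what compensate for the $d+1$ positions lost per row increment upon removal. The organization, however, is genuinely different. The paper first reduces the spacing check to a single critical pair --- the last bead of the first row and the first bead of the second row --- using the staircase structure of $s$-core abaci, and then verifies directly that the new gap is $d_1 + d_2 - d \geq d$. You instead run a contradiction over an arbitrary pair of beads at distance at most $d$ in the reduced abacus, use the $s$-core property and same-row spacing to force the configuration $r_2 = r_1 + k$ with $k \geq 1$, $c_1 \geq c_2$ and $c_1 - c_2 \geq k s' - d$, and then squeeze $c_0$ between $c_1 + d + 1$ and $s + c_2 - d - 1$. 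The underlying arithmetic is ultimately the same, but your version makes fully explicit the reduction that the paper passes over quickly (why only consecutive-row pairs, and in effect only the gap after the last first-row bead, can cause trouble), at the cost of more coordinate bookkeeping. Your closing example with beads $\{1, s+1\}$ for $d+3 \leq s \leq 2d+1$ also documents concretely that the ``not all empty'' hypothesis cannot be dropped, a point the paper's proof uses only implicitly.
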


\begin{proof}
  In light of Lemma~\ref{lem:coreabacus}, the result is clearly an $(s - d -
  1)$-core abacus. It remains to observe that the reduced abacus has spacing
  $d$. For that, it suffices to check that, in the reduced abacus, each bead
  in the first row is followed by $d$ empty positions. Since the initial
  abacus had spacing $d$, we need only consider the last such bead. In the
  initial abacus, this bead was followed by $d_1 \geq d$ spacers, then a
  bead (in the last $d + 1$ columns), followed by another $d_2 \geq d$
  spacers. After removal of the last $d + 1$ columns, it is therefore followed
  by $d_1 + d_2 - d \geq d$ spacers.
\end{proof}

\begin{example}
  We note that this result is not true for removing the first $d + 1$ columns.
  This is illustrated, for instance, with $d = 2$, by the $5$-core abacus with
  beads in positions $1, 4, 9$.
\end{example}

Nath and Sellers \cite{ns-abaci} prove Theorem~\ref{thm:cores:ns} by a
combinatorial argument for $(s, m s + 1)$-core partitions. They then deduce
the case of $(s, m s - 1)$-core partitions by proving the ($q = 1$ case of
the) relation
\begin{equation}
  \boldsymbol{A}_{s, m, - 1} (q) =\boldsymbol{A}_{s - 1, m} (q) + (q + q^2 +
  \ldots + q^{m - 1}) \boldsymbol{A}_{s - 2, m} (q) \label{eq:A:ns:minus}
\end{equation}
using an algebraic argument based on generating functions. Note that, by
Theorem~\ref{thm:abaci:rec}, the relation \eqref{eq:A:ns:minus} is equivalent
to
\begin{equation*}
  \boldsymbol{A}_{s, m, - 1} (q) =\boldsymbol{A}_{s, m} (q) - q^m \boldsymbol{A}_{s
   - 2, m} (q) .
\end{equation*}
Our next result provides a simple combinatorial proof \ of this identity and,
further, generalizes it to $\boldsymbol{A}_{s, m, - r} (q)$ for any $r \geq
1$. For the case $s = r$ in \eqref{eq:A:rneg:rec}, it is understood that
$\boldsymbol{A}_{s, m, r}^d (q) = 0$ if $s < 0$.

\begin{lemma}
  \label{lem:A:rneg:rec}Let $m, r \geq 1$. If $s \geq r$, then
  \begin{equation}
    \boldsymbol{A}_{s, m} (q) =\boldsymbol{A}_{s, m, - r} (q) + q^m \sum_{k = 1}^r
    \boldsymbol{A}_{s - k - 1, m} (q) \boldsymbol{A}_{k - 1, m - 1} (q) .
    \label{eq:A:rneg:rec}
  \end{equation}
\end{lemma}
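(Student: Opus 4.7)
The plan is to prove \eqref{eq:A:rneg:rec} by a combinatorial classification of the $s$-core abaci of spacing $1$ counted by the difference $\boldsymbol{A}_{s, m}(q) - \boldsymbol{A}_{s, m, -r}(q)$, namely those with at most $m$ rows whose maximum bead position lies in the range $\{ms - r, ms - r + 1, \ldots, ms - 1\}$. I would partition this set according to the exact value $ms - k$ of the maximum position, where $k$ ranges over $\{1, 2, \ldots, r\}$, and show that each class contributes $q^m \boldsymbol{A}_{s - k - 1, m}(q) \boldsymbol{A}_{k - 1, m - 1}(q)$ to the generating polynomial.

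The key structural observation is that a bead at position $ms - k$ sits in row $m - 1$ and column $s - k$, and Lemma~\ref{lem:coreabacus} then forces column $s - k$ to be completely full, accounting for the factor $q^m$. The spacing-$1$ condition next forces the adjacent columns $s - k - 1$ and $s - k + 1$ to be entirely empty, while the maximality of the position $ms - k$ restricts every column to the right of $s - k$ to have at most $m - 1$ beads. These forced empty columns act as a buffer that splits the abacus into two independent pieces: an $(s - k - 1)$-core abacus of spacing $1$ and at most $m$ rows formed by columns $0, 1, \ldots, s - k - 2$, and (after relabeling) a $(k - 1)$-core abacus of spacing $1$ and at most $m - 1$ rows formed by columns $s - k + 1, \ldots, s - 1$; the leading empty column of the latter matches the first-column-empty requirement of a core abacus.

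Conversely, any such pair of abaci reassembles uniquely by inserting an empty column, a full column of height $m$, and another empty column between them, and the buffer columns guarantee that the resulting object remains $s$-core and has spacing $1$. Since $n(A) = n(A_1) + m + n(A_2)$ under this bijection, summing $q^m \boldsymbol{A}_{s - k - 1, m}(q) \boldsymbol{A}_{k - 1, m - 1}(q)$ over $k = 1, \ldots, r$ yields the desired identity, with the degenerate case $k = s$ correctly contributing zero via the convention $\boldsymbol{A}_{-1, m}(q) = 0$, reflecting that column $0$ of a core abacus cannot be full. I expect the only real obstacle to be the careful verification that the $s$-core and spacing conditions pass cleanly through the decomposition and reassembly at every interface; this is precisely where the role of the forced empty buffer columns flanking the full column $s-k$ becomes essential.
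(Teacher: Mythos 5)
Your proposal is correct and follows essentially the same route as the paper: both identify the difference $\boldsymbol{A}_{s,m}(q)-\boldsymbol{A}_{s,m,-r}(q)$ with abaci whose last bead sits at position $ms-k$, force column $s-k$ to be full (giving $q^m$), and split the remaining columns into an abacus in $\mathcal{A}_{s-k-1,m}$ and one in $\mathcal{A}_{k-1,m-1}$ separated by the forced empty column, with the $k=s$ term vanishing by the convention $\boldsymbol{A}_{-1,m}(q)=0$. Your write-up simply supplies more of the verification details (emptiness of the buffer columns, preservation of spacing, and the reassembly) that the paper leaves implicit.
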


\begin{proof}
  Suppose $s > r$. Observe that $\boldsymbol{A}_{s, m} (q) -\boldsymbol{A}_{s, m,
  - r} (q)$ consists of those $s$-core abaci with spacing $d = 1$, which have
  $m$ rows and last bead in position $m s - k$ for $k = 1, 2, \ldots, r$. Fix
  one of these values for $k$, corresponding to one of the last $r$ columns.
  That column always contains $m$ beads, contributing $q^m$ to
  \eqref{eq:A:rneg:rec}. The $k - 1$ columns after that column form an abacus
  in $\mathcal{A}_{k - 1, m - 1}$, while the first $s - k - 1$ columns form an
  abacus in $\mathcal{A}_{s - k - 1, m}$. The one remaining column is the
  empty column preceding the column with $m$ beads.
  
  If $s = r$, the same argument still applies but $k = r$ is not possible.
  Since $\boldsymbol{A}_{- 1, m} (q) = 0$, the summand corresponding to $k = r$
  is zero, so that equation \eqref{eq:A:rneg:rec} still holds.
\end{proof}

\section{$s$-core abaci of bounded height}\label{sec:proof}

As introduced in Section~\ref{sec:abaci}, $\mathcal{A}_{s, m, r}^d$ is the set
of $s$-core abaci with spacing $d$ such that the maximum position is less than
$m s + r$. Recall that $n (A)$ is the number of beads in $A$.

As a consequence of Lemma~\ref{lem:cores:abaci:smallr}, if $1 \leq r
\leq d$ or if $r = - 1$, then
\begin{equation}
  \boldsymbol{C}_{s, m, r}^d (q) =\boldsymbol{A}_{s, m, r}^d (q), \label{eq:CA}
\end{equation}
where $\boldsymbol{C}_{s, m, r}^d (q)$, defined in \eqref{eq:C:def}, is the
generating polynomial for the number of parts of $(s, m s + r)$-core
partitions into $d$-distinct parts. In particular,
Theorem~\ref{thm:cores:main} on core partitions is contained in the following
result on abaci.

\begin{theorem}
  \label{thm:abaci:rec}Let $d, m \geq 1$, $r \geq 0$, and write $f_s
  (q) =\boldsymbol{A}_{s, m, r}^d (q)$. If $s > d + 1$ and $s > r$, then
  \begin{equation}
    f_s (q) = f_{s - 1} (q) + (q + q^2 + \ldots + q^m) f_{s - d - 1} (q) .
    \label{eq:abaci:rec}
  \end{equation}
\end{theorem}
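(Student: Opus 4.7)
The plan is to prove the recurrence bijectively by partitioning $\mathcal{A}_{s,m,r}^d$ into two subsets corresponding to the two terms on the right-hand side. The starting observation is that, for any $A \in \mathcal{A}_{s,m,r}^d$, at most one of the last $d+1$ columns can contain beads: their row-$0$ positions $s-1-d, \ldots, s-1$ lie within distance $d$ of each other, so the spacing condition forbids two of them from carrying beads, and the bottom-fill property from Lemma~\ref{lem:coreabacus} then rules out beads in higher rows of the remaining empty columns. If one column $c$ in the last $d+1$ is occupied, then by bottom-fill it contains $k$ beads at rows $0, 1, \ldots, k-1$, and the max-position condition forces $k \leq m$.

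I would then set up two natural reductions. For abaci with column $s-1$ empty, the ``remove last column'' map reinterprets the beads as an $(s-1)$-abacus, so the bead in row $i$, column $j$ acquires new position $i(s-1)+j$. For abaci with beads in the last $d+1$ columns, Lemma~\ref{lem:removecols} yields a ``remove last $d+1$ columns'' map producing some $B \in \mathcal{A}_{s-d-1,m,r}^d$, and the number $k$ of removed beads contributes the factor $q^k$. Since an abacus can be eligible for both reductions (when column $s-1$ is empty but some column in $\{s-1-d, \ldots, s-2\}$ is occupied), I would break the tie via a canonical column: for each $B \in \mathcal{A}_{s-d-1,m,r}^d$, define $c^*(B) \in \{s-1-d, \ldots, s-1\}$ to be the largest column index such that extending $B$ to an $s$-abacus and inserting beads in that column still yields a valid element of $\mathcal{A}_{s,m,r}^d$. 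A short check using the bottom-fill ordering of $B$'s rows shows $c^*(B)$ is well-defined and independent of $k$. The partition is then: Subset~$2$ consists of abaci whose occupied column equals $c^*(B)$, and Subset~$1$ of everything else.

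Subset~$2$ bijects with $\{1, \ldots, m\} \times \mathcal{A}_{s-d-1,m,r}^d$ directly (forward via the second reduction, inverse via extend-and-place-in-$c^*(B)$), contributing $(q + q^2 + \cdots + q^m)\,\boldsymbol{A}_{s-d-1,m,r}^d(q)$. The main obstacle is the other bijection, i.e.\ showing that Subset~$1$ corresponds via the first reduction to $\mathcal{A}_{s-1,m,r}^d$. The crux is a spacing analysis: ``remove last column'' shrinks each cross-row gap $s + j_b - j_a$ by one, so the resulting $(s-1)$-abacus has spacing $d$ if and only if no cross-row transition in $A$ satisfies $j_a - j_b = s - d - 1$ exactly. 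I would show that this ``tight'' case arises precisely when the occupied column equals $c^*(B)$, which places such $A$ in Subset~$2$, leaving Subset~$1$ with exactly those abaci whose first-reduction target is safe. Along the way one must verify that the max-position condition $< ms + r$ transfers appropriately to $< m(s-1) + r$ and $< m(s-d-1) + r$; this uses the hypothesis $s > r$ to handle the row-$m$ boundary.
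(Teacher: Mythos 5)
Your overall architecture is the same as the paper's: split the abaci according to whether the occupied column among the last $d+1$ (if any) sits in a distinguished ``tight'' position, send the tight ones to the $(q+\cdots+q^m)f_{s-d-1}$ term via removal of $d+1$ columns, and send the rest to $f_{s-1}$ via removal of the last column. Your canonical column $c^*(B)$ is determined by the number $h$ of empty positions at the start of the second row of $B$ and coincides with the paper's case split (``$g=d$ or last column non-empty''), and your spacing analysis for the first reduction is sound.

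However, there is a genuine gap in the opening claim that the max-position condition forces the occupied column among the last $d+1$ to contain $k \leq m$ beads. A bead in row $m$ of column $c$ has position $ms+c$, which is allowed whenever $c < r$; since the hypotheses only give $s > r$, a column $c \in \{s-d-1,\ldots,s-2\}$ can satisfy $c < r$ whenever $s \leq r + d$, and then that column may carry $m+1$ beads. (Concretely: $d=2$, $s=6$, $r=5$, $m=1$, beads in positions $\{1,4,7,10\}$ --- column $4$ lies among the last three columns and holds $m+1=2$ beads, with $g=d$.) For such abaci your Subset~$2$ map removes $m+1$ beads, so it does not land in $\{1,\ldots,m\}\times\mathcal{A}_{s-d-1,m,r}^d$; dually, an abacus $B \in \mathcal{A}_{s-d-1,m,r}^d$ with a bead in the $(m+2)$nd row (which exists exactly in this regime) has no valid extension at all, so $c^*(B)$ is not even defined and $B$ is missed by your inverse map. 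This is precisely case~(ii) of the paper's proof, which repairs the correspondence by first increasing the leading non-empty column to $m+2$ beads and only then deleting the last $d+1$ columns, so that the net number of beads removed is again $1,\ldots,m$ and the images are exactly the $B$ with a bead in the $(m+2)$nd row. Your argument as written proves the recurrence only under the stronger hypothesis $s > r+d$ (equivalently, when the $(m+1)$-bead configuration cannot occur); to cover the full range $s>r$ you need the additional idea of the bead-boosting step, or some equivalent redistribution.
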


We note that, in the case $d = 1$, the condition $s > d + 1$ may be dropped,
because \eqref{eq:abaci:rec} trivially holds for $s = 1$ and $s = 2$ (with the
understanding that $f_s (q) = 0$ if $s < 0$).

\begin{proof}
  By definition, $f_s (q)$ is the generating polynomial for $s$-core abaci
  with spacing at least $d$, which consist of $m + 1$ rows such that the top
  row has beads only in the first $r$ positions. Let $g$ be the size of the
  gap between the last bead in the first row and the first bead in the second
  row; or $g = \infty$ if there are no beads in the second row. By assumption,
  $g \geq d$.
  
  There are two disjoint possibilities for such abaci, depending on whether $g
  > d$ and the last column is empty (case \ref{i:gap1}), or not (case
  \ref{i:gap2}).
  \begin{enumerate}
    \item \label{i:gap1}Suppose that $g > d$ and that the last column is
    empty. Since $r \leq s - 1$, by removing the last column, we see that
    these abaci are in bijective correspondence with abaci in $\mathcal{A}_{s
    - 1, m, r}^d$. The generating polynomial for the latter is $f_{s - 1}
    (q)$.
    
    \item \label{i:gap2}Suppose that $g = d$ or that the last column is
    non-empty. In that case, exactly one of the last $d + 1$ columns contains
    beads. We must again consider two cases based on whether the last bead
    (that is, the bead in the highest position) is located in the last $d + 1$
    columns of the ($m + 1$)st row (case \ref{enm:s:small}) or not (case
    \ref{enm:s:big}). (We note that, if $s > r + d + 1$, then we are
    necessarily in case \ref{enm:s:big}.)
    \begin{enumerateroman}
      \item \label{enm:s:big}Suppose the last bead is not located in the last
      $d + 1$ columns of the $(m + 1)$-th row (that is, its position is $< (m
      + 1) s - d - 1$). Consider the abaci obtained by removing the last $d +
      1$ columns, exactly one of which contains beads ($1, 2, \ldots, m$
      many). By Lemma~\ref{lem:removecols}, the resulting abaci are in
      $\mathcal{A}_{s - d - 1, m, r}^d$ with no bead in the ($m + 2$)nd row.
      
      To see that this correspondence is $m$ to $1$, consider an abacus $A$ in
      $\mathcal{A}_{s - d - 1, m, r}^d$ with no bead in the $m + 2$nd row. We
      need to show that there are exactly $m$ ways in which $A$ could have
      been obtained from a case \ref{enm:s:big} abacus $B \in \mathcal{A}_{s,
      m, r}^d$ by deleting the last $d + 1$ columns. Let $h \geq 1$ be
      the number of empty positions at the beginning of the second row of $A$.
      If $h \leq d$, then the last $d - h$ columns of $B$ must have been
      empty and the column before that non-empty (with $m$ choices for the
      number of beads). If $h > d$, then the last column of $B$ must have been
      non-empty (again, with $m$ choices for the number of beads). In either
      case, the remaining of the last $d + 1$ columns of $B$ must be empty. By
      construction, each of the $m$ possibilities for $B$ indeed are case
      \ref{enm:s:big} abaci from $\mathcal{A}_{s, m, r}^d$.
      
      \item \label{enm:s:small}We now are left with abaci in which the last
      bead is located in the last $d + 1$ columns of the $(m + 1)$st row.
      Equivalently, the unique non-empty column among the last $d + 1$ has $m
      + 1$ beads. Since $s > r$, this cannot be the last column. Consequently,
      $g = d$. In particular, there is a bead in the second row of the first
      non-empty column. Increase the number of beads in that first non-empty
      column to $m + 2$ (thereby adding $1, 2, \ldots, m$ beads), and remove
      the last $d + 1$ columns (thereby removing $m + 1$ beads). This process
      removes $1, 2, \ldots, m$ beads. By Lemma~\ref{lem:removecols}, the
      resulting abaci are in $\mathcal{A}_{s - d - 1, m, r}^d$ with a bead in
      the ($m + 2$)nd row.
      
      To see that this correspondence is $m$ to $1$, consider an abacus $A$ in
      $\mathcal{A}_{s - d - 1, m, r}^d$ with a bead in the ($m + 2$)nd row. We
      need to show that there are exactly $m$ ways in which $A$ could have
      been obtained from a case \ref{enm:s:small} abacus $B \in
      \mathcal{A}_{s, m, r}^d$. As in case \ref{enm:s:big}, let $h \geq
      1$ be the number of empty positions at the beginning of the second row
      of $A$. Note that column $h + 1$ of $A$ contains $m + 2$ beads. We
      necessarily have $h \leq d$, so that the last $d - h$ columns of
      $B$ must have been empty and the column before that must have contained
      $m + 1$ beads. In order to obtain $A$ from $B$, column $h + 1$ of $B$
      must have contained $2, 3, \ldots, m + 1$ beads, for a total of $m$
      choices for $B$.
    \end{enumerateroman}
  \end{enumerate}
\end{proof}

Theorem~\ref{thm:abaci:rec} has the following analog for abaci in
$\mathcal{A}_{s, m, r}^d$ where $r$ is negative. Because of
Lemma~\ref{lem:cores:abaci:smallr}, the case $r = - 1$ is of particular
interest, as those abaci correspond to $(s, m s - 1)$-core partitions into
$d$-distinct parts. As such, Theorem~\ref{thm:abaci:rec:neg} (together with
Lemma~\ref{lem:abaci:rec:neg:r1}) proves the part of
Theorem~\ref{thm:cores:main} concerned with these partitions.

\begin{theorem}
  \label{thm:abaci:rec:neg}Let $d, m \geq 1$, $r > 0$, and write $f_s (q)
  =\boldsymbol{A}_{s, m, - r}^d (q)$. If $s > 2 d + r$, then
  \begin{equation}
    f_s (q) = f_{s - 1} (q) + (q + q^2 + \ldots + q^m) f_{s - d - 1} (q) .
    \label{eq:abaci:rec:rneg}
  \end{equation}
\end{theorem}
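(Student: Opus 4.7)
The plan is to adapt the combinatorial argument in the proof of Theorem~\ref{thm:abaci:rec} to the negative-$r$ setting. An abacus in $\mathcal{A}_{s, m, - r}^d$ has $m$ rows, with the top row (row $m - 1$) restricted to columns $0, 1, \ldots, s - r - 1$ by the max-position constraint; this restriction will play the role that the partial top row played in the proof of Theorem~\ref{thm:abaci:rec}. I would introduce the gap $g$ exactly as in that proof and split $\mathcal{A}_{s, m, - r}^d$ into two cases paralleling Case~\ref{i:gap1} and Case~\ref{i:gap2}: a ``Case~1'' of abaci in bijection with $\mathcal{A}_{s - 1, m, - r}^d$, and a ``Case~2'' of abaci in which exactly one of the last $d + 1$ columns contains beads, yielding (via Lemma~\ref{lem:removecols}) an $m$-to-$1$ correspondence with $\mathcal{A}_{s - d - 1, m, - r}^d$.

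For Case~1 I would verify the bijection through its inverse: given $B \in \mathcal{A}_{s - 1, m, - r}^d$, inserting an empty column at position $s - 1$ shifts every bead in row $i$ of $B$ by $+ i$ in linear position. Since $B$ has maximum position at most $(s - 1) m - r - 1$, the resulting abacus $A$ has maximum position at most $((s - 1) m - r - 1) + (m - 1) = m s - r - 2 < m s - r$, so $A \in \mathcal{A}_{s, m, - r}^d$; the spacing is preserved (the row-indexed shift is monotone), the last column of $A$ is empty by construction, and the critical gap $g$ grows by one, so $g > d$. For Case~2 I would reproduce subcases~\ref{enm:s:big} and~\ref{enm:s:small} of the proof of Theorem~\ref{thm:abaci:rec}, where the $k \in \{1, 2, \ldots, m\}$ beads in the distinguished column contribute a factor $q^k$ and together produce $(q + q^2 + \ldots + q^m) f_{s - d - 1}(q)$.

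The hard part will be fitting the Case~1/Case~2 boundary correctly: the verbatim condition ``$g > d$ and the last column is empty'' from the proof of Theorem~\ref{thm:abaci:rec} overcounts in the present setting, because it wrongly includes abaci whose maximum-position bead sits at the corner $(m - 1, s - r - 1)$, which would land at the forbidden position $m (s - 1) - r$ after column removal. I would instead take Case~1 to be the image of the insertion map above (equivalently, add the condition that the top row has no bead in column $s - r - 1$), and redistribute the overcounted abaci into Case~2 via the spacing-forced structure of the last $d + 1$ columns. The hypothesis $s > 2 d + r$ supplies the margin $s - r - 1 > 2 d - 1 \geq d$ that this redistribution needs in order to match the Case~2 bookkeeping of Theorem~\ref{thm:abaci:rec}; the remaining narrow-margin cases (in particular $r = 1$ with $s = 2 d + 2$, and the $d = 1$ regime) are precisely those handled by the separate Lemmas~\ref{lem:abaci:rec:neg:r1} and~\ref{lem:abaci:rec:neg:d1}.
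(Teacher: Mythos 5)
Your Case~1 analysis is correct and you have rightly spotted the corner obstruction there, but the proof has a genuine gap in Case~2, and it is exactly where the real difficulty of this theorem lives. The target of your Case~2 map is $\mathcal{A}_{s-d-1,m,-r}^d$, whose top row is confined to columns $0,\ldots,s-d-r-2$. A Case~2 abacus in $\mathcal{A}_{s,m,-r}^d$, however, may have top-row beads (equivalently, full columns of $m$ beads) in any column up to $\min(s-r-1,\,s-d-2)$, and every such column with index in $\{s-d-r,\ldots,s-d-2\}$ lands, after deleting the last $d+1$ columns, at a position $\geq m(s-d-1)-r$, i.e.\ \emph{outside} the target set. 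So subcases~\ref{enm:s:big} and \ref{enm:s:small} of Theorem~\ref{thm:abaci:rec} cannot be ``reproduced'': the column-removal map does not even land in $\mathcal{A}_{s-d-1,m,-r}^d$, let alone give an $m$-to-$1$ correspondence. Repairing this requires moving beads between the offending full column and the unique non-empty column among the last $d+1$; this is precisely what the proof of Theorem~\ref{thm:abaci:rec:neg:rsmall} does, and it only goes through for $r\leq d+1$, because for $r\geq d+2$ the spacing condition allows two or more full columns in the problematic range while there is only one compensating column available. Your ``redistribution'' of the overcounted Case~1 abaci is asserted rather than carried out, and the one place you invoke the hypothesis $s>2d+r$ (to get $s-r-1>2d-1$) is not attached to any concrete step. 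This last point is decisive: the paper notes that the bound $s>2d+r$ is empirically sharp for $r>d+1$ and $d\geq 2$, so any correct proof must use the $r$-dependence of the bound in an essential way; a decomposition argument of the shape you propose would, if it worked, yield an $r$-independent bound such as $s>2d+1$ and thereby contradict that sharpness.

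For comparison, the paper does not prove Theorem~\ref{thm:abaci:rec:neg} combinatorially at all. It inducts on $r$, using the identity $\boldsymbol{A}_{t,m,-r}^d(q)=\boldsymbol{A}_{t,m-1,t-r}^d(q)$ to trade the negative offset for a positive one at the cost of one row, applying Theorem~\ref{thm:abaci:rec} to obtain the four-term identity \eqref{eq:A:rneg:ind}, and then verifying the recursion for each term via Theorem~\ref{thm:abaci:rec} or the induction hypothesis; Lemma~\ref{lem:abaci:rec:neg:r1} supplies the boundary case $r=1$, $s=2d+2$, which (contrary to your framing) lies inside the scope of the theorem, while Lemma~\ref{lem:abaci:rec:neg:d1} and Theorem~\ref{thm:abaci:rec:neg:rsmall} are improvements beyond it rather than missing cases of it. Your direct approach is viable for $r\leq d+1$ --- it is essentially Theorem~\ref{thm:abaci:rec:neg:rsmall} --- but to prove the stated theorem for all $r>0$ you would need either to carry out the multi-column bead-shuffling for large $r$ (and explain where $s>2d+r$ enters) or to switch to the paper's inductive reduction.
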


It appears that the bound $s > 2 d + r$ is sharp for large $r$, specifically
$r > d + 1$, and $d \geq 2$. For small $r$, the bound can be improved to
$s > 2 d + 1$. We give a direct combinatorial proof, in the spirit of
Theorem~\ref{thm:abaci:rec}, of this fact in
Theorem~\ref{thm:abaci:rec:neg:rsmall}. Based on numerical evidence, the
optimal lower bounds on $s$ appear to be
\begin{equation*}
  s > 2 d + \left\{ \begin{array}{ll}
     r, & \text{if $r > d + 1$ and $d \geq 2$},\\
     1, & \text{if $r \leq d + 1$ and $d \geq 2$},\\
     r - 1, & \text{if $d = 1$} .
   \end{array} \right.
\end{equation*}
We prove the remaining case $d = 1$ in Lemma~\ref{lem:abaci:rec:neg:d1}.

\begin{proof}[Proof of Theorem~\ref{thm:abaci:rec:neg}]
  Let us prove the claim that \eqref{eq:abaci:rec:rneg} holds for all $s > 2 d
  + r$. Fix $r > 1$ and suppose, for the purposes of induction, that this
  claim is true for $r$ replaced with $1, 2, \ldots, r - 1$. In the following,
  we are going to show that the claim holds for $r$ as well. [Along the way,
  we also show that \eqref{eq:abaci:rec:rneg} holds for $r = 1$ and all $s > 2
  d + 2$. Comments and adjustments for that case are included in square
  brackets. Since \eqref{eq:abaci:rec:rneg} is proved for $r = 1$ and $s = 2 d
  + 2$ in Lemma~\ref{lem:abaci:rec:neg:r1} below, this completes the proof of
  Theorem~\ref{thm:abaci:rec:neg}.]
  
  Suppose that $s > 2 d + r$ [if $r = 1$, then $s > 2 d + 2$]. In the
  following, let $t$ be such that $t \geq s - d - 1$ (we intend to
  rewrite each of the terms $f_t (q)$ in \eqref{eq:abaci:rec:rneg}). Note that
  the condition $s > 2 d + r$ translates into $t > d + r - 1$ [while, for $r =
  1$, the condition $s > 2 d + 2$ translates into $t > d + 1$].
  
  By construction, for any integer $t \geq 0$, we have
  \begin{equation}
    \boldsymbol{A}_{t, m, - r}^d (q) =\boldsymbol{A}_{t, m - 1, t - r}^d (q) .
    \label{eq:A:rnegpos}
  \end{equation}
  Since $t \geq r$ and $t > d + 1$, we may apply
  Theorem~\ref{thm:abaci:rec} to the right-hand side of \eqref{eq:A:rnegpos}
  to obtain
  \begin{eqnarray*}
    \boldsymbol{A}_{t, m, - r}^d (q) & = & \boldsymbol{A}_{t - 1, m - 1, t - r}^d
    (q) + (q + q^2 + \ldots + q^{m - 1}) \boldsymbol{A}_{t - d - 1, m - 1, t -
    r}^d (q)\\
    & = & \boldsymbol{A}_{t - 1, m, 1 - r}^d (q) + (q + q^2 + \ldots + q^{m -
    1}) \boldsymbol{A}_{t - d - 1, m, d + 1 - r}^d (q).
  \end{eqnarray*}
  For the second equality, we applied \eqref{eq:A:rnegpos}, in reverse, to
  both summands. On the other hand, we have
  \begin{equation*}
    \boldsymbol{A}_{t, m, d + 1 - r}^d (q) =\boldsymbol{A}_{t - 1, m, d + 1 -
     r}^d (q) + (q + q^2 + \ldots + q^m) \boldsymbol{A}_{t - d - 1, m, d + 1 -
     r}^d (q),
  \end{equation*}
  as a consequence of Theorem~\ref{thm:abaci:rec}, if $d + 1 - r \geq 0$,
  or the induction hypothesis (since, if $d + 1 - r < 0$, then the condition
  $t > 2 d + (r - d - 1) = d + r - 1$ is satisfied). [If $r = 1$, then \ no
  induction hypothesis is required because we will always be in the case $d +
  1 - r \geq 0$.]
  
  Comparing the previous two equations, we conclude that, for $t \geq s -
  d - 1$,
  \begin{eqnarray}
    \boldsymbol{A}_{t, m, - r}^d (q) & = & \boldsymbol{A}_{t - 1, m, 1 - r}^d (q)
    - q^m \boldsymbol{A}_{t - d - 1, m, d + 1 - r}^d (q) \nonumber\\
    & + & \boldsymbol{A}_{t, m, d + 1 - r}^d (q) -\boldsymbol{A}_{t - 1, m, d + 1
    - r}^d (q) .  \label{eq:A:rneg:ind}
  \end{eqnarray}
  Let $g_t (q)$ be any one of the four terms on the right-hand side of
  \eqref{eq:A:rneg:ind}. As above, after checking that the conditions are
  satisfied, it follows from either Theorem~\ref{thm:abaci:rec} or the
  induction hypothesis that, for the special case $t = s$,
  \begin{equation}
    g_s (q) = g_{s - 1} (q) + (q + q^2 + \ldots + q^m) g_{s - d - 1} (q),
    \label{eq:g:rec}
  \end{equation}
  which then shows that the same relation holds for $f_s (q) =\boldsymbol{A}_{s,
  m, - r}^d (q)$, thus establishing the claimed relation
  \eqref{eq:abaci:rec:rneg}. Let us illustrate the details of checking
  \eqref{eq:g:rec} at the example of $g_s (q) =\boldsymbol{A}_{s - d - 1, m, d +
  1 - r}^d (q)$, which is the most restrictive of the four cases. In that
  case, if $d + 1 - r \geq 0$, then Theorem~\ref{thm:abaci:rec} applies
  to show \eqref{eq:g:rec} because the condition $s - d - 1 > d + 1$, that is
  $s > 2 d + 2$, follows from the assumption that $s > 2 d + r$ [whereas, in
  the case $r = 1$, we are assuming $s > 2 d + 2$ to begin with]. On the other
  hand, if $d + 1 - r < 0$ [this does not happen when $r = 1$], then the
  induction hypothesis applies to show \eqref{eq:g:rec} because the assumed
  condition $s > 2 d + r$ implies the necessary condition $s - d - 1 > 2 d +
  (r - d - 1)$.
\end{proof}

We now complete the proof of Theorem~\ref{thm:abaci:rec:neg} by showing that,
in the case $r = 1$, \eqref{eq:abaci:rec:rneg} holds for $s = 2 d + 2$.

\begin{lemma}
  \label{lem:abaci:rec:neg:r1}Let $d, m \geq 1$, and write $f_s (q)
  =\boldsymbol{A}_{s, m, - 1}^d (q)$. If $s = 2 d + 2$, then
  \eqref{eq:abaci:rec:rneg} holds.
\end{lemma}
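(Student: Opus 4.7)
The plan is to reduce the claimed recurrence to a short algebraic identity among the generating polynomials of $(d+1)$-core and $d$-core abaci with spacing $d$, all of which admit explicit closed forms thanks to the extremely restricted structure of such abaci.

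First, combining the $r = 1$ case of \eqref{eq:A:rnegpos}, namely $\boldsymbol{A}_{t, m, -1}^d(q) = \boldsymbol{A}_{t, m-1, t-1}^d(q)$, with Theorem~\ref{thm:abaci:rec} (applicable whenever $t > d+1$) and converting both summands back via \eqref{eq:A:rnegpos}, I obtain
\begin{equation*}
  \boldsymbol{A}_{t, m, -1}^d(q) = \boldsymbol{A}_{t-1, m}^d(q) + (q + q^2 + \ldots + q^{m-1})\,\boldsymbol{A}_{t-d-1, m, d}^d(q).
\end{equation*}
I would apply this to $t = 2d+2$ and $t = 2d+1$ (both exceed $d+1$) to expand $f_s(q)$ and $f_{s-1}(q)$. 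The remaining term $f_{s-d-1}(q) = \boldsymbol{A}_{d+1, m, -1}^d(q) = \boldsymbol{A}_{d+1, m-1, d}^d(q)$ falls exactly outside the range of Theorem~\ref{thm:abaci:rec} (which would require $d+1 > d+1$) and must be kept as is; this is precisely the obstruction that prevents the proof of Theorem~\ref{thm:abaci:rec:neg} from covering $s = 2d+2$.

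A further application of Theorem~\ref{thm:abaci:rec} with $r = 0$ gives $\boldsymbol{A}_{2d+1, m}^d - \boldsymbol{A}_{2d, m}^d = (q + \ldots + q^m)\,\boldsymbol{A}_{d, m}^d$. Substituting this together with the previous expansions into \eqref{eq:abaci:rec:rneg} and cancelling, the desired identity reduces to
\begin{equation*}
  (q + \ldots + q^{m-1})\bigl[\boldsymbol{A}_{d+1, m, d}^d(q) - \boldsymbol{A}_{d, m, d}^d(q)\bigr] = (q + \ldots + q^m)\bigl[\boldsymbol{A}_{d+1, m-1, d}^d(q) - \boldsymbol{A}_{d, m}^d(q)\bigr].
\end{equation*}

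The key structural observation for verifying this identity is that in any $(d+1)$-core (or $d$-core) abacus with spacing $d$, each row contains at most one bead, since two positions in the same row differ by at most $d$ while beads must differ by more than $d$; combined with Lemma~\ref{lem:coreabacus}, this forces all beads of a non-empty such abacus to lie in a single column $c \geq 1$ and to fill the topmost consecutive rows $0, 1, \ldots, k-1$. Enumerating by the number of beads $k$ and the column $c$, and comparing admissible options between the $(d+1)$-core and $d$-core cases, I expect
\begin{equation*}
  \boldsymbol{A}_{d+1, m, d}^d - \boldsymbol{A}_{d, m, d}^d = q + q^2 + \ldots + q^m \quad \text{and} \quad \boldsymbol{A}_{d+1, m-1, d}^d - \boldsymbol{A}_{d, m}^d = q + q^2 + \ldots + q^{m-1},
\end{equation*}
since in each case the extra contribution is from abaci whose common column is the new column $c = d$ available only in the $(d+1)$-core setting. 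Both sides of the target identity then equal $(q + \ldots + q^{m-1})(q + \ldots + q^m)$. The main hurdle is the bookkeeping for the maximum allowed number of beads $k_{\max}(c)$ in each column under the four different position bounds, with particular care at the borderline column $c = d$ of the $(d+1)$-core cases where the position bound is just barely (not) satisfied.
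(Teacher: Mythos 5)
Your reduction is sound up to its last step, and it is a genuinely different route from the paper's (which computes the difference $\boldsymbol{A}_{2d+2,m,-1}^d-\boldsymbol{A}_{2d+1,m,-1}^d$ by a direct case analysis of the abaci involved). The identity $\boldsymbol{A}_{t,m,-1}^d(q)=\boldsymbol{A}_{t-1,m}^d(q)+(q+\cdots+q^{m-1})\boldsymbol{A}_{t-d-1,m,d}^d(q)$ for $t>d+1$ is correct, as is the resulting reduction of \eqref{eq:abaci:rec:rneg} to
\[
  (q+\cdots+q^{m-1})\bigl[\boldsymbol{A}_{d+1,m,d}^d-\boldsymbol{A}_{d,m,d}^d\bigr]
  =(q+\cdots+q^{m})\bigl[\boldsymbol{A}_{d+1,m-1,d}^d-\boldsymbol{A}_{d,m}^d\bigr],
\]
and this last identity is in fact true. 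The gap is in the closed forms you ``expect'' for the two bracketed differences: they are false for every $d\geq 2$. A $d$-core abacus with spacing $d$ contains at most \emph{one} bead in total, because two beads in the same column of a $d$-abacus sit in positions exactly $d$ apart, which violates spacing $d$; so $\boldsymbol{A}_{d,m,d}^d=\boldsymbol{A}_{d,m}^d=1+(d-1)q$ for all $m\geq 1$, while Lemma~\ref{lem:abaci:initial} gives $\boldsymbol{A}_{d+1,m,d}^d=1+d(q+\cdots+q^m)+(d-1)q^{m+1}$. Hence $\boldsymbol{A}_{d+1,m,d}^d-\boldsymbol{A}_{d,m,d}^d=q\,(1+(d-1)q)(1+q+\cdots+q^{m-1})$, not $q+\cdots+q^m$. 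The heuristic that ``the extra contribution comes only from the new column $c=d$'' is where this goes wrong: each of the columns $c\in\{1,\dots,d-1\}$ holds up to $m+1$ beads in the $(d+1)$-abacus but only one bead in the $d$-abacus, so every column contributes to the difference. (Your formulas do hold for $d=1$, which is presumably how they were guessed.)

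Fortunately the error cancels itself. With the correct values one finds $\boldsymbol{A}_{d+1,m-1,d}^d-\boldsymbol{A}_{d,m}^d=q\,(1+(d-1)q)(1+q+\cdots+q^{m-2})$, and then both sides of the reduced identity equal $q^2(1+(d-1)q)(1+q+\cdots+q^{m-2})(1+q+\cdots+q^{m-1})$, so your argument can be completed once the four polynomials are computed honestly from Lemma~\ref{lem:abaci:initial} (or directly from the single-column structure). By contrast, the paper's proof avoids Theorem~\ref{thm:abaci:rec} here entirely: it computes $\boldsymbol{A}_{d+1,m,-1}^d(q)=(1+(d-1)q)(1+q+\cdots+q^{m-1})$ directly and shows combinatorially, by classifying the abaci of $\mathcal{A}_{2d+2,m,-1}^d$ not obtained by appending an empty column to one of $\mathcal{A}_{2d+1,m,-1}^d$, that the difference $f_s-f_{s-1}$ equals $q(1+(d-1)q)(1+q+\cdots+q^{m-1})^2$. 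Your route trades that case analysis for algebra plus the evaluation of four small generating polynomials; both are legitimate, but yours is only complete once the evaluation is done correctly.
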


\begin{proof}
  We need to show that
  \begin{equation}
    \boldsymbol{A}_{2 d + 2, m, - 1}^d (q) =\boldsymbol{A}_{2 d + 1, m, - 1}^d (q)
    + (q + q^2 + \ldots + q^m) \boldsymbol{A}_{d + 1, m, - 1}^d (q) .
    \label{eq:abaci:rec:neg:r1}
  \end{equation}
  Because each abacus in $\mathcal{A}_{d + 1, m, - 1}^d$ has at most one
  column of beads,
  \begin{eqnarray}
    \boldsymbol{A}_{d + 1, m, - 1}^d (q) & = & 1 + (q + q^2 + \ldots + q^{m -
    1}) + (d - 1) (q + q^2 + \ldots + q^m) \nonumber\\
    & = & (1 + (d - 1) q) (1 + q + \ldots + q^{m - 1}) . 
    \label{eq:A:small:neg:r1}
  \end{eqnarray}
  On the other hand, abaci in $\mathcal{A}_{2 d + 2, m, - 1}^d$ and
  $\mathcal{A}_{2 d + 1, m, - 1}^d$ have at most two columns of beads.
  Appending an empty column to each abacus in $\mathcal{A}_{2 d + 1, m, -
  1}^d$ yields an abacus in $\mathcal{A}_{2 d + 2, m, - 1}^d$. The abaci in
  $\mathcal{A}_{2 d + 2, m, - 1}^d$ not obtained in this fashion fall into
  three groups:
  \begin{enumerateroman}
    \item Those for which the last column is not empty (and hence contains $1,
    2, \ldots, m - 1$ many beads). The generating polynomial for these abaci
    is
    \begin{equation*}
      (q + q^2 + \ldots + q^{m - 1}) (1 + (q + q^2 + \ldots + q^m) + (d - 1)
       q),
    \end{equation*}
    based on whether there is no second column of beads (contributing the $1$
    in the second factor), or there is a column of beads including position
    $d$ (contributing $q + q^2 + \ldots + q^m$), or there is a single bead in
    positions $1, 2, \ldots, d - 1$ (contributing $(d - 1) q$).
    
    \item Those for which the last column is empty and there is a gap of
    exactly $d$ spaces between the last bead in the first row and the first
    bead in the second row. Since there are $d - 1$ possibilities for the
    location of the first column of beads (which contains $2, 3, \ldots, m$
    many beads), each of which determines the location of the second column
    (containing $1, 2, \ldots, m$ beads), the generating polynomial for these
    abaci is
    \begin{equation*}
      (d - 1) (q^2 + q^3 + \ldots + q^m) (q + q^2 + \ldots + q^m) .
    \end{equation*}
    \item Those for which the second to last column contains $m$ beads and, to
    avoid double counting, there are more than $d$ spaces following the last
    bead in the first row. The generating polynomial for these abaci is
    \begin{equation*}
      q^m (1 + (d - 1) q),
    \end{equation*}
    because, in addition to the column of $m$ beads, there can only be a
    single bead in positions $1, 2, \ldots, d - 1$.
  \end{enumerateroman}
  Adding these three generating polynomials, we have shown that
  \begin{equation*}
    \boldsymbol{A}_{2 d + 2, m, - 1}^d (q) -\boldsymbol{A}_{2 d + 1, m, - 1}^d
     (q) = q (1 + (d - 1) q) (1 + q + \ldots + q^{m - 1})^2 .
  \end{equation*}
  Together with \eqref{eq:A:small:neg:r1}, this proves
  \eqref{eq:abaci:rec:neg:r1}.
\end{proof}

The bound $s > 2 d + r$ in Theorem~\ref{thm:abaci:rec:neg} can be improved to
$s > 2 d + 1$ in the case of small $r$, specifically $r \leq d + 1$. We
give a direct combinatorial proof for that improved bound, which empirically
is best possible.

\begin{theorem}
  \label{thm:abaci:rec:neg:rsmall}Let $m \geq 1$, $1 \leq r
  \leq d + 1$, and write $f_s (q) =\boldsymbol{A}_{s, m, - r}^d (q)$. If $s
  > 2 d + 1$, then
  \begin{equation}
    f_s (q) = f_{s - 1} (q) + (q + q^2 + \ldots + q^m) f_{s - d - 1} (q) .
  \end{equation}
\end{theorem}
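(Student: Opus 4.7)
The plan is to adapt the combinatorial argument from the proof of Theorem~\ref{thm:abaci:rec} to the negative-$r$ setting. Via the identity~(\ref{eq:A:rnegpos}), abaci in $\mathcal{A}_{s,m,-r}^d$ can be viewed as $s$-core abaci with spacing $d$ and $m$ rows, where the top row (row $m-1$) is restricted to the first $s-r$ columns. The hypothesis $r \le d+1$ is precisely what makes the improved bound accessible: it ensures that the $r$ forbidden top-row columns lie inside the last $d+1$ columns of the abacus, so every constraint near the right edge remains localized to the region manipulated in Theorem~\ref{thm:abaci:rec}.

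For a fixed $A\in\mathcal{A}_{s,m,-r}^d$, I would introduce the gap $g$ between the last bead in the first row and the first bead in the second row (or $g=\infty$) and split into the same two main cases as in Theorem~\ref{thm:abaci:rec}: either $g>d$ with the last column empty, or the complementary case in which exactly one of the last $d+1$ columns contains beads. Spacing still forces at most one of the last $d+1$ columns to carry beads, and the complementary case is further sub-divided according to whether the unique non-empty column among the last $d+1$ columns has a bead in row $m-1$.

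The principal obstruction is that, in the negative-$r$ setting, simple deletion of the last column sends $A$ (with $g>d$ and empty last column) to an abacus in $\mathcal{A}_{s-1,m,1-r}^d$ rather than $\mathcal{A}_{s-1,m,-r}^d$: the forbidden top-row block shifts with $s$. I plan to address this by subdividing the first case according to whether the top row of $A$ has a bead at column $s-r-1$. When it does not, ordinary deletion of the last column yields a clean bijection with $\mathcal{A}_{s-1,m,-r}^d$, contributing $f_{s-1}(q)$. When it does, the core property forces $m$ beads into column $s-r-1$, and spacing combined with $r\le d+1$ locks a wide block of columns around $s-r-1$ (with only that column carrying beads); these ``peninsula'' abaci contribute a $q^m\boldsymbol{A}_{s-d-1,m,-r}^d(q)$ term via a bijection that removes an appropriately located block of $d+1$ columns (the exact block depending on whether $r\le d$ or $r=d+1$). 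Combined with the $(q+\cdots+q^{m-1})\boldsymbol{A}_{s-d-1,m,-r}^d(q)$ contribution from the complementary case (via the usual $m$-to-$1$ correspondence of Theorem~\ref{thm:abaci:rec}, adapted since a non-empty column lying in the forbidden top-row region carries at most $m-1$ beads), this produces the required $(q+q^2+\cdots+q^m)\boldsymbol{A}_{s-d-1,m,-r}^d(q)$.

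The technical heart of the proof is the verification that these branches combine in exactly the right proportions and that each bijection respects spacing, with slight case distinctions depending on whether $r\le d$ or $r=d+1$. The bound $s>2d+1$ enters naturally through the requirement $s-d-1\ge d+1$, which is what keeps spacing valid when deleting a block of $d+1$ columns one of which carries $m$ beads; numerical evidence cited in the paper suggests that this bound is optimal.
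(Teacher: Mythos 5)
Your handling of the $f_{s-1}(q)$ term is sound and agrees with the paper: you correctly identify that plain deletion of the last column lands in $\mathcal{A}_{s-1,m,1-r}^d$ rather than $\mathcal{A}_{s-1,m,-r}^d$, and that the fix is to also exclude abaci with a bead in the final allowed position $ms-r-1$. The gap is in your treatment of the remaining abaci: the proposed split into a ``peninsula'' piece contributing $q^m\boldsymbol{A}_{s-d-1,m,-r}^d(q)$ and a complementary piece contributing $(q+\cdots+q^{m-1})\boldsymbol{A}_{s-d-1,m,-r}^d(q)$ is false. Take $d=r=1$, $m=2$, $s=5$, so $\boldsymbol{A}_{3,2,-1}(q)=1+2q+q^2$ and your peninsula piece should equal $q^2+2q^3+q^4$. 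But the peninsula abaci (column $s-r-1=3$ full, last column empty, $g>d$) are exactly those with bead positions $\{3,8\}$, $\{1,3,8\}$, $\{1,3,6,8\}$, giving $q^2+q^3+q^4$: the map that deletes the block of $d+1$ columns containing the full column $s-r-1$ is injective but not surjective onto $\mathcal{A}_{3,2,-1}$, since the target $\{2\}$ has no peninsula preimage (a bead at position $2$ next to the full column at position $3$ would violate spacing). Symmetrically, your complementary case is claimed to give $q+2q^2+q^3$ but actually gives $q+2q^2+2q^3$; the culprit is $\{2,4,7\}$, which has $3$ beads and whose last two columns, once deleted, leave $\{2,5\}$ in the $3$-abacus, violating the height bound of $\mathcal{A}_{3,2,-1}$. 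Two things break in that case: the unique non-empty column among the last $d+1$ columns need not lie in the forbidden top-row block (only the last $r$ of those $d+1$ columns are forbidden), so it can carry $m$ beads; and even when it carries fewer, a column further left can become over-full relative to the shifted height bound after deletion.

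The paper avoids this by treating all non-first-case abaci in a single $m$-to-$1$ correspondence onto $\mathcal{A}_{s-d-1,m,-r}^d$: for a target abacus $A$, the preimage with $m$ extra beads is sometimes a peninsula abacus and sometimes a complementary-case abacus in which a column near the right end is topped up to $m$ beads while the unique non-empty column among the last $d+1$ absorbs the remaining beads (and the over-full column is trimmed when passing down to $A$). In the example above, the $m$-extra-beads preimages of $\emptyset$, $\{1\}$, $\{1,4\}$ are the three peninsulas, while that of $\{2\}$ is $\{2,4,7\}$. Your two claimed pieces therefore do not individually equal $q^m\boldsymbol{A}_{s-d-1,m,-r}^d(q)$ and $(q+\cdots+q^{m-1})\boldsymbol{A}_{s-d-1,m,-r}^d(q)$ (they only sum correctly), so the proof as outlined does not close; the uniform $m$-to-$1$ bookkeeping, with the bead-transfer adjustment for over-full columns, is the missing ingredient.
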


\begin{proof}
  Our proof is a variation of the proof of Theorem~\ref{thm:abaci:rec}. By
  definition, $f_s (q)$ is the generating polynomial for $s$-core abaci with
  spacing at least $d$ between beads, which consist of $m$ rows such that the
  top row has no beads in the last $r$ positions. As in the proof of
  Theorem~\ref{thm:abaci:rec}, let $g$ be the size of the gap between the last
  bead in the first row and the first bead in the second row. There are two
  disjoint possibilities for such abaci.
  \begin{enumerate}
    \item \label{i:neg:gap1}Suppose that $g > d$ with the last column empty
    and no bead in the final allowed position (that is, no bead in position $m
    s - r - 1$). By removing the last column, we see that these abaci are in
    bijective correspondence with abaci in $\mathcal{A}_{s - 1, m, r}^d$.
    
    \item \label{i:neg:gap2}Suppose the conditions of \ref{i:neg:gap1} are not
    satisfied. We distinguish the following cases.
    \begin{enumerateroman}
      \item Suppose that the last $d + 1$ columns are empty. This can only
      happen if $r = d + 1$ and if there is a bead in the highest allowed
      position (that is, column $s - r - 1$ has $m$ beads). We then remove the
      last $d + 1$ columns, as well as the $m$ beads from column $s - r - 1$,
      and obtain an abacus in $\mathcal{A}_{s - d - 1, m, - r}^d$.
      
      \item Otherwise, there is precisely one non-empty column among the last
      $d + 1$ columns. Suppose (additionally) that one of the $r$ columns $s -
      d - r, \ldots, s - d - 2, s - d - 1$, say column $j$, contains $m$ beads
      (equivalently, removing the last $d + 1$ columns does not result in an
      abacus in $\mathcal{A}_{s - d - 1, m, - r}^d$). In that case, the one
      non-empty column among the last $d + 1$ must be one of the last $r$ and,
      as such, contains $b < m$ beads. We then remove the last $d + 1$ columns
      and reduce the number of beads in column $j$ by $m - b$. The resulting
      abacus is in $\mathcal{A}_{s - d - 1, m, - r}^d$ and has precisely $m$
      fewer beads.
      
      \item Otherwise, we remove the last $d + 1$ columns and, by
      Lemma~\ref{lem:removecols}, obtain an abacus in $\mathcal{A}_{s - d - 1,
      m, - r}^d$.
    \end{enumerateroman}
    Fix an abacus $A \in \mathcal{A}_{s - d - 1, m, - r}^d$. To complete the
    proof, we need to show that $A$ is obtained through the described process
    from $m$ case \ref{i:neg:gap2} abaci $B \in \mathcal{A}_{s, m, - r}^d$,
    with $1, 2, \ldots, m$ additional beads. Let $h \geq 1$ be the number
    of empty positions at the beginning of the second row of $A$. If $h
    \leq d$, then let $k = s - (d - h)$. Otherwise, let $k = s$. We can
    construct $B$ from $A$ by adding $d + 1$ empty columns, and then filling
    column $k$ with $b$ beads, where $1 \leq b < m$. (Note that, if $h
    \leq d$, then $g = d$ for $B$, while otherwise $B$ has a non-empty
    last column.) It remains to similarly construct an abacus $B$ from $A$ by
    adding $d + 1$ columns and $m$ beads. For that, we distinguish three
    cases:
    \begin{itemize}
      \item If column $k$ is not among the last $r$ columns of $B$, then it
      can also be filled with $m$ beads.
      
      \item Otherwise, if one of the last $r$ columns of $A$ contains $c
      \geq 1$ beads, then we add $m - c$ beads to the corresponding
      column of $B$ and $c$ beads to column $k$ of $B$.
      
      \item Otherwise, we add $m$ beads to column $s - r - 1$ of $B$.
    \end{itemize}
    In each case, we check that abacus $B$ corresponds to case
    \ref{i:neg:gap2} and results in abacus $A$ by the process described for
    case \ref{i:neg:gap2}.
  \end{enumerate}
\end{proof}

We further provide a quick proof that Theorem~\ref{thm:abaci:rec:neg}, in the
case $d = 1$, continues to hold for all $s > r + 1$, which empirically is best
possible.

\begin{lemma}
  \label{lem:abaci:rec:neg:d1}Let $m, r \geq 1$, and write $f_s (q)
  =\boldsymbol{A}_{s, m, - r} (q)$. If $s > r + 1$, then
  \eqref{eq:abaci:rec:rneg} holds.
\end{lemma}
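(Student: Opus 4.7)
The plan is to reduce the claim to the already-established recurrence for $\boldsymbol{A}_{s,m}(q)$ (the case $r=0$, $d=1$ of Theorem~\ref{thm:abaci:rec}) by passing through Lemma~\ref{lem:A:rneg:rec}, which in the case $d=1$ expresses $\boldsymbol{A}_{s,m,-r}(q)$ in terms of the $r=0$ polynomials. Write $p(q) = q + q^2 + \cdots + q^m$.

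First, since $s > r+1 \geq r$, Lemma~\ref{lem:A:rneg:rec} yields
\begin{equation*}
  \boldsymbol{A}_{s,m,-r}(q) = \boldsymbol{A}_{s,m}(q) - q^m \sum_{k=1}^{r} \boldsymbol{A}_{s-k-1,m}(q)\,\boldsymbol{A}_{k-1,m-1}(q).
\end{equation*}
Next, I would apply Theorem~\ref{thm:abaci:rec} in the case $d=1$, $r=0$ (whose hypothesis $s>d+1$ may be dropped when $d=1$, as noted after its statement) to each of the polynomials $\boldsymbol{A}_{t,m}(q)$ appearing above. Specifically, since $s-k-1 \geq s-r-1 \geq 1$ for every $k \in \{1,\dots,r\}$, we have
\begin{equation*}
  \boldsymbol{A}_{s,m}(q) = \boldsymbol{A}_{s-1,m}(q) + p(q)\,\boldsymbol{A}_{s-2,m}(q),
\end{equation*}
and, for each such $k$,
\begin{equation*}
  \boldsymbol{A}_{s-k-1,m}(q) = \boldsymbol{A}_{s-k-2,m}(q) + p(q)\,\boldsymbol{A}_{s-k-3,m}(q).
\end{equation*}

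Substituting these expansions into the formula for $\boldsymbol{A}_{s,m,-r}(q)$ and regrouping by the factors $1$ and $p(q)$ gives
\begin{align*}
  \boldsymbol{A}_{s,m,-r}(q) &= \Biggl[\boldsymbol{A}_{s-1,m}(q) - q^m \sum_{k=1}^{r} \boldsymbol{A}_{s-k-2,m}(q)\,\boldsymbol{A}_{k-1,m-1}(q)\Biggr] \\
  &\quad + p(q)\Biggl[\boldsymbol{A}_{s-2,m}(q) - q^m \sum_{k=1}^{r} \boldsymbol{A}_{s-k-3,m}(q)\,\boldsymbol{A}_{k-1,m-1}(q)\Biggr].
\end{align*}
Finally, since $s-1 \geq r$ and $s-2 \geq r$, I would apply Lemma~\ref{lem:A:rneg:rec} \emph{in reverse} to recognize the two bracketed expressions as $\boldsymbol{A}_{s-1,m,-r}(q)$ and $\boldsymbol{A}_{s-2,m,-r}(q)$ respectively, yielding the desired recurrence $f_s(q) = f_{s-1}(q) + p(q)\,f_{s-2}(q)$.

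There is no real obstacle here — the argument is purely formal, and the only mild care needed is to verify that the hypothesis $s>r+1$ guarantees all the indices involved ($s$, $s-1$, $s-2$, and $s-k-1$ for $1 \leq k \leq r$) satisfy the conditions under which Lemma~\ref{lem:A:rneg:rec} and the $d=1$ case of Theorem~\ref{thm:abaci:rec} apply. The critical check is the tightest one: $s-2 \geq r$, which is exactly $s > r+1$, showing that this bound is what makes the backward application of Lemma~\ref{lem:A:rneg:rec} to $\boldsymbol{A}_{s-2,m,-r}(q)$ legitimate.
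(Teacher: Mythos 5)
Your proposal is correct and follows essentially the same route as the paper: both express $\boldsymbol{A}_{s,m,-r}(q)$ via Lemma~\ref{lem:A:rneg:rec}, apply the $d=1$, $r=0$ case of Theorem~\ref{thm:abaci:rec} to each term $\boldsymbol{A}_{s,m}(q)$ and $\boldsymbol{A}_{s-k-1,m}(q)$, and transfer the recurrence back (your explicit regrouping and reverse use of Lemma~\ref{lem:A:rneg:rec} at $s-1$ and $s-2$ is just the spelled-out version of the paper's remark that it suffices to verify the recurrence for each term for $s \geq r+2$). Your identification of $s-2 \geq r$ as the binding constraint matches the paper's hypothesis $s > r+1$.
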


\begin{proof}
  By Lemma~\ref{lem:A:rneg:rec}, for $s \geq r$,
  \begin{equation*}
    \boldsymbol{A}_{s, m, - r} (q) =\boldsymbol{A}_{s, m} (q) - q^m \sum_{k =
     1}^r \boldsymbol{A}_{s - k - 1, m} (q) \boldsymbol{A}_{k - 1, m - 1} (q) .
  \end{equation*}
  Let $g_s (q)$ be any one of the terms $\boldsymbol{A}_{s, m} (q)$ or
  $\boldsymbol{A}_{s - k - 1, m} (q)$, for $k = 1, 2, \ldots, r$, on the
  right-hand side. To prove our claim, it suffices to show that, for $s
  \geq r + 2$,
  \begin{equation*}
    g_s (q) = g_{s - 1} (q) + (q + q^2 + \ldots + q^m) g_{s - 2} (q) .
  \end{equation*}
  This follows from Theorem~\ref{thm:abaci:rec} with $r = 0$ (recalling that,
  for $d = 1$, the condition $s > d + 1$ may be dropped).
\end{proof}

We close this section by showing that Theorem~\ref{thm:abaci:rec} can be used
to compute $\boldsymbol{A}_{s, m, r}^d (q)$ for any choice of the parameters,
that is for any $s, m, d \geq 1$ and any integers $r$. First, note that,
if $0 \leq m s + r < s$, then $\boldsymbol{A}_{s, m, r}^d (q)
=\boldsymbol{A}_{m s + r, 1, 0}^d (q)$ (if $m s + r < 0$, then $\boldsymbol{A}_{s,
m, r}^d (q) = 1$). Second, if $m s + r \geq s$, we can write $m s + r =
m' s + r'$ where $m' \geq 1$ and $0 \leq r' < s$. By definition,
\begin{equation*}
  \boldsymbol{A}_{s, m, r}^d (q) =\boldsymbol{A}_{s, m', r'}^d (q) .
\end{equation*}
These observations reduce the general case to the case $s > r \geq 0$. In
light of Theorem~\ref{thm:abaci:rec}, the following initial conditions
therefore suffice to recursively determine $\boldsymbol{A}_{s, m, r}^d (q)$ for
any choice of the parameters. In the following, we use the notation $x^+ =
\max (0, x)$.

\begin{lemma}
  \label{lem:abaci:initial}Let $d, m \geq 1$ and $r$ be any integer. For
  $s \in \{ 1, 2, \ldots, d \}$,
  \begin{equation*}
    \boldsymbol{A}_{s, m, r}^d (q) = 1 + \min (s - 1, m s + r - 1)^+ q.
  \end{equation*}
  For the case $s = d + 1$, write $r = s r' + r_0$ with $0 \leq r_0
  \leq d$. If $m + r' < 0$, then $\boldsymbol{A}_{d + 1, m, r}^d (q) = 1$.
  If $m + r' \geq 0$, then
  \begin{equation*}
    \boldsymbol{A}_{d + 1, m, r}^d (q) = 1 + d (q + q^2 + \ldots + q^{m + r'})
     + (r_0 - 1)^+ q^{m + r' + 1} .
  \end{equation*}
\end{lemma}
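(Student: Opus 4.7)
My strategy is to obtain both parts by directly enumerating the abaci in $\mathcal{A}_{s,m,r}^d$ when $s$ is very small, exploiting the fact that the spacing condition together with Lemma~\ref{lem:coreabacus} forces an essentially one-dimensional structure. The key preliminary observation, which I will use throughout, is that in any $s$-core abacus the beads of each column form an initial segment of rows starting from row~$0$, and that consecutive beads in a column differ by exactly $s$.

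\emph{Case $s \in \{1, 2, \ldots, d\}$.} If any column contained two beads, they would differ by (a multiple of) $s \leq d$, which contradicts the requirement that positions of beads differ by more than $d$; so every column contains at most one bead, and by Lemma~\ref{lem:coreabacus} such a bead must lie in row~$0$. Within row $0$ any two beads would then differ by at most $s-1 < d$, so in fact the whole abacus contains at most one bead. This bead can occupy any position in $\{1, 2, \ldots, s-1\}$ subject to the further constraint $< m s + r$, giving $\min(s-1, ms+r-1)$ choices when this quantity is nonnegative, and none otherwise. Weighting by $q$ and adding the empty abacus yields the stated formula.

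\emph{Case $s = d+1$.} The same two observations apply: each column is an initial segment of beads, and within a single row two beads would differ by at most $d$, so each row contains at most one bead. Since every non-empty column contributes a bead to row~$0$, this forces at most one column to contain beads. If column $j \in \{1, \ldots, d\}$ contains $k \geq 1$ beads, the topmost is at position $j + (k-1)(d+1)$, so the admissibility condition becomes
\begin{equation*}
  (k-1-(m+r'))(d+1) < r_0 - j,
\end{equation*}
using the decomposition $r = (d+1)r' + r_0$ with $0 \leq r_0 \leq d$. I plan to solve this integer inequality by cases on the sign of $k - 1 - (m+r')$: one finds that $k \leq m+r'$ is admissible for every $j \in \{1, \ldots, d\}$, while $k = m+r'+1$ is admissible precisely when $j < r_0$, i.e.\ for $(r_0-1)^+$ values of $j$. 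Summing $q^k$ over all admissible $(j,k)$ and adding $1$ for the empty abacus gives the claimed polynomial. When $m+r'<0$ no $k \geq 1$ satisfies the inequality, so the empty abacus is the only contribution and the total is~$1$.

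\emph{Main obstacle.} Nothing deep is at stake; the only thing to be careful about is the boundary value $j = r_0$, which disallows $k = m+r'+1$ and is what produces the asymmetric factor $(r_0-1)^+$ rather than $r_0$ in the final expression. Tracking this, together with the sign cases needed to handle arbitrary integer $r$ via the decomposition $r = (d+1)r' + r_0$, is the bookkeeping I expect to require the most care.
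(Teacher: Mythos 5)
Your proposal is correct and follows essentially the same route as the paper: a direct enumeration using that for $s \le d$ the abacus has at most one bead, and for $s = d+1$ at most one (initial-segment) column of beads in one of the $d$ columns after the first, with the boundary case producing the $(r_0-1)^+$ term. The only cosmetic difference is that you resolve the position bound by solving the inequality $(k-1-(m+r'))(d+1) < r_0 - j$ directly, whereas the paper first normalizes via $\mathcal{A}_{d+1,m,r}^d = \mathcal{A}_{d+1,m+r',r_0}^d$ and then counts.
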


\begin{proof}
  The case $s \in \{ 1, 2, \ldots, d \}$ is an immediate consequence of the
  fact that the corresponding abaci can have at most one bead.
  
  In the case $s = d + 1$, we can have a full column of beads in any of the $s
  - 1 = d$ columns after the first (and there can be no bead in a second
  column). Note that $\mathcal{A}_{d + 1, m, r}^d =\mathcal{A}_{d + 1, m + r',
  r_0}^d$. Since the case $m + r' < 0$ is trivial, suppose $m + r' \geq
  0$. Each of the last $d$ columns can accomodate $m + r'$ many beads.
  Additionally, any of the $r_0 - 1$ columns $1, 2, 3, \ldots, r_0 - 1$ can
  accomodate an additional bead in the $(m + r' + 1)$st row.
\end{proof}

\begin{example}
  In the case $d = 1$, we find that $\boldsymbol{A}_{1, m, r} (q) = 1$ and
  \begin{equation}
    \boldsymbol{A}_{2, m, r} (q) = 1 + q + q^2 + \ldots + q^{m + \lfloor r / 2
    \rfloor} . \label{eq:A:d1:s2}
  \end{equation}
  In the cases $r = \pm 1$, $q = 1$, these specialize to the initial
  conditions in Theorem~\ref{thm:cores:ns}.
\end{example}

\begin{example}
  Recall from \eqref{eq:CA} that, in the case $(d, r) = (1, - 1)$, we have
  $\boldsymbol{A}_{s, m, r} (q) =\boldsymbol{C}_{s, m, r} (q)$. For instance, for
  $(s, m) = (4, 3)$ we have $m s - 1 = 11$ and the $(4, 11)$-core partitions
  into distinct parts are
  \begin{eqnarray*}
    &  & \emptyset, \quad (1), \quad (2), \quad (3), \quad (2, 1), \quad (4,
    1), \quad (5, 2), \quad (6, 3), \quad (3, 2, 1),\\
    &  & (5, 2, 1), \quad (7, 4, 1), \quad (8, 5, 2), \quad (4, 3, 2, 1),
    \quad (6, 3, 2, 1), \quad (5, 4, 3, 2, 1) .
  \end{eqnarray*}
  There are $\boldsymbol{C}_{4, 3, - 1} (1) = 15$ such partitions and the
  generating polynomial for the number of parts in these partitions is
  \begin{equation*}
    \boldsymbol{C}_{4, 3, - 1} (q) = 1 + 3 q + 4 q^2 + 4 q^3 + 2 q^4 + q^5 = (1
     + q + q^2) (1 + 2 q + q^2 + q^3) .
  \end{equation*}
  We note that such a factorization always exists for the polynomials
  $\boldsymbol{C}_{s, m, - 1} (q)$. Indeed, for $s \geq 2$, we claim that
  $\boldsymbol{C}_{s, m, - 1} (q)$ is divisible by $1 + q + q^2 + \ldots + q^{m
  - 1}$. This is true for $\boldsymbol{C}_{2, m, - 1} (q)$ by \eqref{eq:A:d1:s2}
  and the claim follows inductively from Theorem~\ref{thm:cores:main}.
\end{example}

\begin{example}
  \label{eg:C:m2:rm1}Continuing the previous example in the special case $m =
  2$, we conclude from Theorem~\ref{thm:cores:main} that
  \begin{equation}
    \boldsymbol{C}_{s, 2, - 1} (q) = (1 + q)^{s - 1} . \label{eq:C:m2:rm1}
  \end{equation}
  In particular, the number of $(s, 2 s - 1)$-core partitions into $k$
  distinct parts is $\binom{s - 1}{k}$. Less generally, as observed in
  \cite{s-corepartitions}, there are $2^{s - 1}$ many $(s, 2 s - 1)$-core
  partitions into distinct parts.
\end{example}

\begin{remark}
  Example~\ref{eg:C:m2:rm1} suggests that there is a natural correspondence of
  compositions of $s$ and $(s, 2 s - 1)$-core partitions into distinct parts.
  Indeed, let us briefly describe a bijection between compositions $\mu$ of
  $s$ into $k$ parts and $(s, 2 s - 1)$-core partitions $\lambda$ into $k - 1$
  distinct parts. Given a partition $\lambda$, the corresponding composition
  $\mu = (\mu_1, \mu_2, \ldots, \mu_k)$ of $s$ is defined as follows. Consider
  the $s$-abacus of $\lambda$, and let $\mu_1 > 0$ be the number of empty
  columns before the first bead. Then we define $\mu_{j + 1}$, for $j = 1, 2,
  \ldots, k - 1$, depending on the location of the $j$th bead in the abacus.
  \begin{enumerate}
    \item If the $j$th bead is the only bead in its columns, then $\mu_{j +
    1}$ is $1$ more than the number of empty columns following that bead.
    
    \item If the $j$th bead is the first of two beads in its columns, then
    $\mu_{j + 1} = 1$.
    
    \item If the $j$th bead is the second of two beads in its columns, then
    $\mu_{j + 1}$ is the number of empty columns following that bead.
  \end{enumerate}
  Note that the size $\mu_1 + \ldots + \mu_k$ of the resulting composition
  $\mu$ is exactly $s$, the number of columns in the $s$-abacus of $\lambda$.
  We leave it to the interested reader to confirm that this correspondence is
  indeed a bijection.
\end{remark}

\section{Maximum number of parts and partitions with maximal initial
gaps}\label{sec:max}

The correspondence with abaci makes certain properties of $s$-core abaci
conveniently accessible. For instance, we can readily deduce the maximum
number of parts of the $s$-core partitions featured in
Theorem~\ref{thm:cores:main}.

\begin{lemma}
  \label{lem:maxnrparts}Let $M_{s, m, r}^d$ be the maximum number of parts of
  $(s, m s + r)$-core partitions into $d$-distinct parts. Let $d, m \geq
  1$. If $1 \leq r \leq d$ and $s > 1$, then
  \begin{equation}
    M_{s, m, r}^d = \left\lfloor \frac{s}{d + 1} \right\rfloor m + \left\{
    \begin{array}{ll}
      0, & \text{if $r = 1$ and $s \equiv 0, 1 \pmod{d +
      1}$},\\
      1, & \text{otherwise} .
    \end{array} \right. \label{eq:maxnrparts}
  \end{equation}
  Likewise, if $r = - 1$ and $s > 2$, then
  \begin{equation*}
    M_{s, m, - 1}^d = \left\lfloor \frac{s}{d + 1} \right\rfloor m + \left\{
     \begin{array}{ll}
       - 1, & \text{if $d = 1$ and $s \equiv 0 \pmod{2}$},\\
       0, & \text{otherwise, if $s \equiv 0, 1, 2 \pmod{d +
       1}$},\\
       1, & \text{otherwise} .
     \end{array} \right.
  \end{equation*}
\end{lemma}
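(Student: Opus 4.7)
By Lemma~\ref{lem:cores:abaci:smallr}, $M_{s,m,r}^d$ equals the maximum number of beads in an $s$-core abacus with spacing $d$ whose maximum position is less than $ms + r$; equivalently, $M_{s,m,r}^d = \deg \boldsymbol{C}^d_{s,m,r}(q)$. Writing $s = q(d+1) + u$ with $q = \lfloor s/(d+1) \rfloor$ and $u \in \{0, 1, \ldots, d\}$, my plan is to prove the formula by induction on $s$, using that the polynomial recurrence of Theorem~\ref{thm:cores:main} yields the degree recurrence $M_s = \max(M_{s-1},\, m + M_{s-d-1})$ in its range of validity (the leading coefficients cannot cancel since $\boldsymbol{C}^d_{s,m,r}(q)$ has non-negative coefficients).

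For the base cases when $1 \leq r \leq d$, namely $s \in \{1, \ldots, d+1\}$, Lemma~\ref{lem:abaci:initial} provides $\boldsymbol{C}^d_{s,m,r}(q)$ explicitly, and reading off the degree gives $M_1 = 0$, $M_s = 1$ for $2 \leq s \leq d$, and $M_{d+1} = m$ if $r = 1$ and $M_{d+1} = m + 1$ otherwise, each of which agrees with the claimed formula. For $r = -1$ the base cases are $s = 2$ if $d = 1$ and $s \in \{1, \ldots, 2d+1\}$ if $d \geq 2$; these I would verify by direct analysis of the corresponding small abaci, using the last-column cap $k_{s-1} \leq m - 1$ that arises from the position bound $\leq ms - 2$.

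For the inductive step I split on $u$. If $u \geq 1$, then $s - 1$ and $s - d - 1$ correspond to the pairs $(q, u - 1)$ and $(q - 1, u)$; the inductive hypothesis gives $M_{s-1} = qm + \epsilon'$ and $m + M_{s-d-1} = qm + \epsilon''$, so $M_s = qm + \max(\epsilon', \epsilon'')$, and a short case analysis in $r$ and $u$ verifies $\max(\epsilon', \epsilon'') = \epsilon$. If $u = 0$, then $s - 1$ and $s - d - 1$ correspond to $(q - 1, d)$ and $(q - 1, 0)$; the hypothesis gives $M_{s-1} = (q-1)m + \epsilon'$ and $m + M_{s-d-1} = qm + \epsilon''$, and since $m \geq 1$ the second dominates, yielding $M_s = qm + \epsilon''$, which once again matches the formula.

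The main obstacle will be the base-case verification for $r = -1$ with $d \geq 2$, where there are up to $2d + 1$ small values of $s$ to check. The subtlety is that the last-column cap $k_{s-1} \leq m - 1$ interacts with the cross-row spacing constraint $\min H_{r+1} \geq \max H_r + d + 1 - s$ (where $H_r = \{j : k_j > r\}$ is the set of columns with at least $r + 1$ beads), and this interplay is what produces the three-way adjustment $\epsilon \in \{-1, 0, 1\}$ in the formula (with $\epsilon = -1$ occurring only when $d = 1$ and $s$ is even, where tight packing is forced to use the capped last column). I would carry out these verifications by enumerating the admissible column-height patterns for each small $s$ and confirming that the maximum bead count matches the formula in each residue class of $s \pmod{d+1}$.
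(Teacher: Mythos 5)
Your route is essentially the inductive one that the paper mentions in its first sentence of the proof and then deliberately declines to carry out: the paper instead proves the more general Lemma~\ref{lem:maxnrparts:abaci} by explicitly constructing an extremal abacus (bead in position $1$, at most $1+\lfloor (s-2)/(d+1)\rfloor$ beads in the first row, $\lfloor s/(d+1)\rfloor$ in each of the next $m-1$ rows, and a residual contribution $\lfloor (r-2)/(d+1)\rfloor$ governed by $r$), which yields $M_{s,m,r}^d$ for \emph{all} $1<r\leq s$, not just $r\leq d$. Your degree-recurrence argument is sound where you carry it out: nonnegativity of coefficients does give $M_s=\max(M_{s-1},\,m+M_{s-d-1})$, your base cases for $1\leq r\leq d$ read off correctly from Lemma~\ref{lem:abaci:initial}, and I checked that your residue-class case analysis in the inductive step closes in all cases (including $u=0$, where $m\geq 1$ makes the second branch dominate). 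The genuine shortfall is exactly where you predict it: for $r=-1$ and $d\geq 2$ the recurrence only starts at $s>2d+1$, and the base cases $s\in\{d+1,\dots,2d+1\}$ (note you only need these, not all of $\{1,\dots,2d+1\}$, since $s-d-1\geq d+1$ in the recurrence's range) are left as a promised hand-enumeration of column-height patterns; this is the bulk of the remaining work and is not done in your proposal. The paper's trick that makes this labor evaporate is the observation that bounding the maximum position by $ms-1=(m-1)s+(s-1)$ gives $M_{s,m,-1}^d=M_{s,m-1,s-1}^d$, so the $r=-1$ case is just the general-$r$ formula of Lemma~\ref{lem:maxnrparts:abaci} evaluated at $r=s-1$ with $m$ replaced by $m-1$ (one checks that $\lfloor s/(d+1)\rfloor(m-1)+\lfloor (s-3)/(d+1)\rfloor+1$ reproduces the stated three-way correction). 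I would recommend adopting that reindexing in place of your base-case enumeration; short of that, your plan is viable but incomplete as written.
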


\begin{proof}
  Observe that, by definition, $M_{s, m, r}^d = \deg (\boldsymbol{C}_{s, m, r}^d
  (q))$. The claim could therefore be deduced inductively from
  Theorem~\ref{thm:cores:main} together with the initial conditions in
  Lemma~\ref{lem:abaci:initial}.
  
  Instead, in Lemma~\ref{lem:maxnrparts:abaci}, we provide a combinatorial
  argument that explicitly constructs a partition with the maximum number of
  parts (or, equivalently, the corresponding $s$-core abacus). The second part
  of our claim, the case $r = - 1$, follows from using $M_{s, m, - 1}^d =
  M_{s, m - 1, s - 1}^d$ in Lemma~\ref{lem:maxnrparts:abaci}.
\end{proof}

Note that, by Lemma~\ref{lem:cores:abaci:smallr}, the definition of $M_{s, m,
r}^d$ in Lemma~\ref{lem:maxnrparts:abaci} agrees with the definition in
Lemma~\ref{lem:maxnrparts} if $1 \leq r \leq d$ or if $r = - 1$.

\begin{lemma}
  \label{lem:maxnrparts:abaci}Let $M_{s, m, r}^d$ be the maximum number of
  beads of an abacus in $\mathcal{A}_{s, m, r}^d$. Let $d, m \geq 1$. If
  $s \geq r = 1$, then \eqref{eq:maxnrparts} holds. If $s \geq r >
  1$, then
  \begin{equation}
    M_{s, m, r}^d = \left\lfloor \frac{s}{d + 1} \right\rfloor m +
    \left\lfloor \frac{r - 2}{d + 1} \right\rfloor + 1.
    \label{eq:maxnrparts:abaci}
  \end{equation}
\end{lemma}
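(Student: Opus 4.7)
The plan is to realize the claimed maximum by an explicit construction and to match it with a tight row-by-row count. Write $s = q(d+1) + t$ with $q = \lfloor s/(d+1) \rfloor$ and $0 \leq t \leq d$. For $r \geq 2$, set $q' = \lfloor (r-2)/(d+1) \rfloor + 1$, which is the largest number of columns at spacing $d+1$ fitting inside $\{1, \ldots, r-1\}$; one checks $q' \leq q + 1$, with equality possible only when $r \geq s - t + 2$ (in particular $t \geq 2$). The extremal configurations will be supported on the evenly spaced columns $c_i := 1 + (i-1)(d+1)$.

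For the construction, I would distinguish cases. When $r \geq 2$ and $q' \leq q$: use columns $c_1, \ldots, c_q$, with $c_1, \ldots, c_{q'}$ holding $m+1$ beads and $c_{q'+1}, \ldots, c_q$ holding $m$ beads, for a total of $qm + q'$. When $r \geq 2$ and $q' = q + 1$: use columns $c_1, \ldots, c_{q+1}$, with $c_1$ holding a single bead and each $c_i$ for $i \geq 2$ holding $m + 1$ beads, again totaling $qm + q'$. When $r = 1$ and $t \leq 1$: use $c_1, \ldots, c_q$ each with $m$ beads, total $qm$. When $r = 1$ and $t \geq 2$: use $c_1, \ldots, c_{q+1}$ with $c_1$ holding one bead and the rest $m$ beads, total $qm + 1$. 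In each case, all bead positions stay below $ms + r$, successive bead positions differ by at least $d+1$, and the wraparound $\max J_i - \min J_{i+1} \leq s - d - 1$ (where $J_i$ denotes the columns with a bead in row $i$) is routinely checked.

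For the upper bound, I would use that $J_0 \supseteq J_1 \supseteq \ldots \supseteq J_m$ by Lemma~\ref{lem:coreabacus}, and that $J_m \subseteq \{1, \ldots, r-1\}$ forces $|J_m| \leq q'$; within-row spacing gives $|J_0| \leq q + 1$ with equality only if $t \geq 2$. Enumerating $J_i$ as $c_1 < c_2 < \ldots$ and writing $c_j = \min J_{i+1}$ when $J_{i+1} \neq \emptyset$, the inter-row spacing yields $(|J_i| - j)(d+1) \leq s - d - 1$, hence $|J_i| \leq q + (j - 1)$, and equality forces the first $j - 1$ columns of $J_i$ to have height exactly $i + 1$. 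Combining these bounds: when $|J_0| = q + 1$, necessarily $c_1 \in J_0 \setminus J_1$ contributes a single bead and is thereby removed from eligibility for $J_m$, dropping $|J_m| \leq q' - 1$ (or $0$ for $r = 1$); the sum telescopes to $(q+1) + (m-1)q + (q'-1) = qm + q'$ (respectively $qm + 1$). When $|J_0| \leq q$, a direct summation again gives $\leq qm + q'$.

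The main obstacle will be the reconciliation in the $|J_0| = q + 1$ regime: the apparent gain of one extra column in row $0$ is precisely offset by the loss of one eligibility for an extra-tall column in $J_m$, and this is what makes the construction sharp. The border case $q' = q + 1$ also requires a distinct construction, and the achievability of the upper bound there (where the wraparound slack is fully consumed) deserves a separate careful verification.
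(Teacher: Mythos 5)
Your proposal is correct and follows essentially the same route as the paper's proof: an extremal construction on the evenly spaced columns $1, d+2, 2d+3, \ldots$ together with a row-by-row upper bound whose only subtlety is that a first row of size $\lfloor s/(d+1)\rfloor + 1$ forces its first column to be a singleton, costing one bead in the $(m+1)$st row. Your treatment of the upper bound is in fact somewhat more explicit than the paper's, which states the row capacities as observations about what ``may be filled.''
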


\begin{proof}
  Since the maximum number of beads is always achieved by an abacus with a
  bead in position 1, we assume in the sequel that there is a bead in position
  $1$. Since there are $s - 2$ further positions in the first row, the maximum
  number of beads in the first row of an abacus in $\mathcal{A}_{s, m, r}^d$
  is
  \begin{equation}
    1 + \left\lfloor \frac{s - 2}{d + 1} \right\rfloor .
    \label{eq:maxnrparts:firstrow}
  \end{equation}
  Observe that each of the $m - 1$ subsequent rows can be filled with up to
  $\lfloor s / (d + 1) \rfloor$ many beads. Note that
  \eqref{eq:maxnrparts:firstrow} either equals $\lfloor s / (d + 1) \rfloor$
  or exceeds it by $1$. The former case happens precisely if $s \equiv 0, 1$
  modulo $d + 1$. This proves \eqref{eq:maxnrparts} in the case $r = 1$.
  
  In the case that \eqref{eq:maxnrparts:firstrow} equals $\lfloor s / (d + 1)
  \rfloor$, the $m + 1$st row may be filled with $1$ bead in position $m s +
  1$ plus an additional $\lfloor (r - 2) / (d + 1) \rfloor$ beads. In the case
  that \eqref{eq:maxnrparts:firstrow} exceeds $\lfloor s / (d + 1) \rfloor$ by
  $1$ (then the second column is empty except for a bead in the first row),
  the $m + 1$st row may be filled with $\lfloor (r - 2) / (d + 1) \rfloor$
  beads. In either case, we find that the maximum total number of beads is
  \eqref{eq:maxnrparts:abaci}.
\end{proof}

For comparison, let us consider $M_{s, t}$, the maximum number of parts of an
$(s, t)$-core partition (without any restriction on its parts). Equivalently,
$M_{s, t}$ is the largest part of an $(s, t)$-core partition. The following is
a corollary of Sylvester's theorem on the Frobenius problem; we refer to
\cite[Chapter~1]{beck-robins} for a beautiful exposition.

\begin{lemma}
  \label{lem:maxnrparts:st}For coprime $s, t > 1$, we have $M_{s, t} =
  \frac{1}{2} (s - 1) (t - 1)$.
\end{lemma}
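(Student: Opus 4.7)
The plan is to translate the $(s,t)$-core condition, via the abacus characterization (Lemma~\ref{lem:coreabacus}), into a statement about numerical semigroups, and then invoke Sylvester's classical count of the gaps of $\langle s,t\rangle$. Recall that the beads of the $s$-abacus of $\lambda$ are placed at the (positive) first-column hook lengths, so $n(\lambda) = |\beta(\lambda)|$, where $\beta(\lambda) \subset \mathbb{Z}_{>0}$ denotes this set of bead positions. Thus $M_{s,t}$ is the largest possible size of $\beta(\lambda)$ among $(s,t)$-core $\lambda$.

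The key observation is that Lemma~\ref{lem:coreabacus}, applied to the $s$-abacus of $\lambda$, says exactly that the complement $S := \mathbb{Z}_{\geq 0} \setminus \beta(\lambda)$ contains $0$ (from the empty first column) and satisfies $S + s \subseteq S$ (from ``no spacers below a bead''). Applying the lemma to the $t$-abacus yields $S + t \subseteq S$ as well. Starting from $0 \in S$ and iterating these two closure properties, we obtain $\langle s,t\rangle \subseteq S$, where $\langle s,t\rangle$ is the numerical semigroup generated by $s$ and $t$. Consequently,
\begin{equation*}
n(\lambda) \;=\; |\beta(\lambda)| \;=\; |\mathbb{Z}_{>0} \setminus S| \;\leq\; |\mathbb{Z}_{>0} \setminus \langle s,t\rangle| \;=\; \tfrac{1}{2}(s-1)(t-1),
\end{equation*}
where the final equality is Sylvester's theorem on the Frobenius problem (\cite[Chapter~1]{beck-robins}) for coprime $s, t > 1$.

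To see that this bound is tight, I would exhibit the extremal partition directly: take $\beta^* := \mathbb{Z}_{>0} \setminus \langle s,t\rangle$, which by Sylvester is a finite set of distinct positive integers and is therefore the first-column hook-length set of a unique partition $\lambda^*$. By construction, $\mathbb{Z}_{\geq 0} \setminus \beta^* = \langle s,t\rangle$ is closed under addition by $s$ and by $t$, so Lemma~\ref{lem:coreabacus} confirms that $\lambda^*$ is both $s$-core and $t$-core, with $n(\lambda^*) = \tfrac{1}{2}(s-1)(t-1)$. The only conceptual step is the clean dictionary between the abacus condition and closure of the spacer set under addition by $s$ and $t$; everything after that is a direct appeal to Sylvester's theorem, so I expect no real obstacle beyond being careful about the bookkeeping (in particular, that position $0$ is a spacer by the ``empty first column'' hypothesis).
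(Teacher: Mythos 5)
Your proof is correct and follows essentially the same route as the paper: both translate the $(s,t)$-core condition into the statement that the first-column hook-length set must avoid the numerical semigroup generated by $s$ and $t$, invoke Sylvester's count of the gaps, and observe that the gap set itself is the hook-length set of an $(s,t)$-core partition attaining the bound. No substantive differences to report.
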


\begin{proof}
  As discussed in Section~\ref{sec:abaci}, we can identify partitions
  $\lambda$ with the set $A (\lambda)$ of their first column hook lengths
  (these are precisely the positions of beads in the corresponding abaci).
  Note that $| A (\lambda) |$ is the number of parts of $\lambda$.
  
  By Lemma~\ref{lem:coreabacus}, $\lambda$ is $s$-core if and only if $s
  \leq h \in A (\lambda)$ implies that $h - s \in A (\lambda)$. It
  follows that, if $\lambda$ is $(s_1, \ldots, s_n)$-core, then $A (\lambda)$
  is a subset of the set $F (s_1, \ldots, s_n)$ of positive integers which
  cannot be written as a nonnegative linear combination of $s_1, \ldots, s_n$.
  Moreover, the set $F (s_1, \ldots, s_n)$ itself corresponds to a $(s_1,
  \ldots, s_n)$-core partition. It was shown by Sylvester
  \cite[Theorem~1.3]{beck-robins} that
  \begin{equation*}
    | F (s, t) | = \frac{(s - 1) (t - 1)}{2},
  \end{equation*}
  proving our claim.
\end{proof}

It is considerably more difficult to obtain the maximum size of core
partitions. In this direction, we only mention the following well-known result
of Olsson and Stanton \cite{os-core}.

\begin{theorem}
  Suppose $s$ and $t$ are coprime. The maximum size of an $(s, t)$-core
  partition is
  \begin{equation}
    \frac{(s^2 - 1) (t^2 - 1)}{24} . \label{eq:core:max}
  \end{equation}
\end{theorem}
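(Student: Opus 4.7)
My plan is to identify the maximum-size $(s,t)$-core partition explicitly and then compute its size via the first-column hook-length formula, building on the framework in the proof of Lemma~\ref{lem:maxnrparts:st}. Let $F(s,t)$ denote the set of positive integers not representable as a nonnegative integer combination of $s$ and $t$. As observed there, $F(s,t)$ is itself the first-column hook-length set of an $(s,t)$-core partition $\lambda^*$, and every $(s,t)$-core $\lambda$ satisfies $A(\lambda) \subseteq F(s,t)$. Writing the first-column hook lengths in decreasing order as $h_1 > h_2 > \cdots > h_r$, the relation $\lambda_i = h_i - (r - i)$ gives the size formula
\begin{equation*}
|\lambda| = \sum_{h \in A(\lambda)} h - \binom{|A(\lambda)|}{2}.
\end{equation*}

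The first step is to show that $\lambda^*$ actually realizes the maximum size. I would endow $F(s,t)$ with the partial order $\preceq$ defined by $a \preceq b$ iff $b - a$ is a nonnegative integer combination of $s$ and $t$. By Lemma~\ref{lem:coreabacus}, the $(s,t)$-cores are in bijection with the order ideals of $(F(s,t), \preceq)$, with $\lambda^*$ corresponding to the whole poset. Setting $n = |F(s,t)| = (s-1)(t-1)/2$ and letting $S = F(s,t) \setminus A(\lambda)$ be the complementary filter of size $k$, the size formula yields
\begin{equation*}
|\lambda^*| - |\lambda| = \sum_{h \in S} h - \frac{k(2n - k - 1)}{2}.
\end{equation*}
Proving that this is nonnegative for every filter $S$ is where the filter structure is essential; it can be established via the Sylvester duality $h \mapsto (st - s - t) - h$, which is an order-reversing bijection between $F(s,t)$ and $\{0, s, t, 2s, s+t, \ldots\} \cap [0, st - s - t - 1]$, transforming the required estimate into a tractable counting statement about numerical semigroups.

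The second step is the direct computation of $|\lambda^*|$. Substituting $A(\lambda^*) = F(s,t)$ into the size formula and combining Sylvester's count $|F(s,t)| = (s-1)(t-1)/2$ with the classical Brown--Shiue formula
\begin{equation*}
\sum_{h \in F(s,t)} h = \frac{(s-1)(t-1)(2st - s - t - 1)}{12},
\end{equation*}
a routine algebraic simplification yields $|\lambda^*| = (s^2 - 1)(t^2 - 1)/24$.

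The main obstacle is the filter-sum inequality in the first step: the same bound fails for generic $k$-subsets of $F(s,t)$, so the filter structure under $\preceq$ must genuinely be exploited. A tempting alternative is an exchange argument that iteratively adds $\preceq$-minimal elements of $F(s,t) \setminus A$ to $A$, but as the case $(s,t) = (3,5)$ with $A = \{1, 4\}$ shows, $|\lambda|$ may remain constant at intermediate steps, so some care is required to ensure that such an approach reaches $\lambda^*$ without ever decreasing the size.
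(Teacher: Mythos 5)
Your framework is correct as far as it goes: the size formula $|\lambda|=\sum_{h\in A(\lambda)}h-\binom{|A(\lambda)|}{2}$ is right, the identification of $(s,t)$-cores with order ideals of the gap poset $(F(s,t),\preceq)$ follows from Lemma~\ref{lem:coreabacus} as in Lemma~\ref{lem:maxnrparts:st}, the reduction of maximality to the inequality $\sum_{h\in S}h\ge \tfrac{k(2n-k-1)}{2}$ for every size-$k$ filter $S$ is an accurate bookkeeping step, and the closing computation combining Sylvester's count with the Brown--Shiue gap-sum formula does simplify to $(s^2-1)(t^2-1)/24$, so the arithmetic half of the argument is fine (citing Brown--Shiue is as legitimate as the paper's citation of Sylvester). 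Note that the paper itself offers no proof of this theorem; it is quoted from Olsson and Stanton, so your attempt cannot be measured against an internal argument and must stand on its own.

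It does not: the filter-sum inequality, equivalently the statement that the core $\lambda^*$ with $A(\lambda^*)=F(s,t)$ has maximal size among all $(s,t)$-cores, is precisely the hard content of the theorem, and your proposal never proves it. Asserting that the Sylvester duality $h\mapsto (st-s-t)-h$ turns it into ``a tractable counting statement about numerical semigroups'' is not an argument; after dualizing, with $g=st-s-t$, one is left with the claim that every order ideal of size $k$ in the poset of nongaps in $[0,g-1]$ has element sum at most $\tfrac{k(g+k)}{2}$, which is in no way more evident than the original inequality and still requires a genuine idea exploiting the semigroup structure (this is why the paper calls the maximum-size problem ``considerably more difficult'' than the maximum number of parts, which is settled by Sylvester's count alone). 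You even flag the two natural shortcuts that fail --- the bound is false for generic $k$-subsets, and a greedy exchange argument can stall, as your $(3,5)$ example shows --- but you do not supply the replacement argument. As written, the proposal is a correct reduction plus an unproven central lemma, so it is not a proof of \eqref{eq:core:max}.
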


Recall that $\mathcal{C}_{s, m, r}^d$ is the set of $(s, m s + r)$-core
partitions into $d$-distinct parts. The enumerations in
Theorem~\ref{thm:cores:main} are based on a recursive description in $s$ with
$m$ (as well as $r$ and $d$) fixed. Note that $\mathcal{C}_{s, m, r} \subseteq
\mathcal{C}_{s, m + 1, r}$. In the remainder of this section, we observe
another way in which $\mathcal{C}_{s, m, r}$ is naturally embedded in
$\mathcal{C}_{s, m + 1, r}$.

We say that the {\emph{initial gap}} of an integer partition $(\lambda_1,
\lambda_2, \ldots)$, with $\lambda_1 \geq \lambda_2 \geq \ldots$, is
the difference $\lambda_1 - \lambda_2$ of the first and second part. If a
partition $\lambda$ is $s$-core, then all the gaps between the parts of
$\lambda$ are strictly less than $s$. We say that a $(s_1, s_2, \ldots)$-core
partition $\lambda$ has maximal initial gap if $\lambda$ has initial gap $\min
(s_1, s_2, \ldots) - 1$.

Let $\mathcal{G}_{s, m, r}^d$ be the set of partitions $\lambda \in
\mathcal{C}_{s, m, r}^d$ with maximal initial gap, together with the empty
partition (in other words, $\mathcal{G}_{s, m, r}^d$ consists of those
$\lambda \in \mathcal{C}_{s, m, r}^d$ such that, if $\lambda$ has a first part
$\lambda_1$, then it has a second part $\lambda_2$ and $\lambda_1 - \lambda_2
= s - 1$).

\begin{lemma}
  Let $s, m \geq 1$. If $1 \leq r \leq d$ or $r = - 1$, then
  there is a natural $1$-$1$ correspondence between $(s, m s + r)$-core
  partitions into $d$-distinct parts with maximal initial gap and $(s, (m - 1)
  s + r)$-core partitions into $d$-distinct parts
\end{lemma}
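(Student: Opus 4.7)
The plan is to translate the problem to abaci via Lemma~\ref{lem:cores:abaci:smallr} and then exhibit an explicit bijection that removes (or adds) the topmost bead. The hypothesis $1 \le r \le d$ or $r = -1$ is exactly what the cited lemma requires, so $\mathcal{C}_{s,m,r}^d$ is identified with $\mathcal{A}_{s,m,r}^d$ and $\mathcal{C}_{s,m-1,r}^d$ with $\mathcal{A}_{s,m-1,r}^d$; it suffices to produce a bijection between the corresponding sets of abaci.

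First I would recast $\mathcal{G}_{s,m,r}^d$ in the abacus picture. If $\lambda$ has parts $\lambda_1 \ge \cdots \ge \lambda_n$, then the bead positions are $\lambda_i + n - i$, so the top two bead positions differ by $\lambda_1 - \lambda_2 + 1$. The condition $\lambda_1 - \lambda_2 = s - 1$ is therefore equivalent to the top two beads of the $s$-abacus sitting in the same column, one row apart; by Lemma~\ref{lem:coreabacus}, the whole column is then filled from row $0$ up through the top bead.

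Next I would define $\Phi\colon \mathcal{G}_{s,m,r}^d \to \mathcal{C}_{s,m-1,r}^d$ by sending the empty abacus to itself and, for any non-empty abacus in $\mathcal{G}_{s,m,r}^d$, by deleting the topmost bead. Its candidate inverse sends the empty abacus to itself and, given a non-empty $A' \in \mathcal{A}_{s,m-1,r}^d$ with top bead at position $p'$, adds a new bead at position $p' + s$. For each direction the verifications are: the $s$-core property is preserved (creating or destroying a bead at the top of an already-full column produces no spacer below any bead, by Lemma~\ref{lem:coreabacus}); the maximum position shifts by exactly $s$, matching the change from $ms + r$ to $(m-1)s + r$ in the bound on the maximum position; and the spacing is unchanged on removal, while on addition the new bead lies at distance exactly $s$ from the old top bead.

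The main obstacle is the spacing check in the reverse direction: the added bead sits at distance exactly $s$ from the previous top, so the inverse yields a valid $d$-distinct abacus only when $s - 1 \ge d$, i.e.\ $s > d$. This is the same threshold that makes $\mathcal{G}_{s,m,r}^d$ nontrivial, since no $d$-distinct partition with two parts can have initial gap $s - 1$ unless $s - 1 \ge d$; under this hypothesis the maps $\Phi$ and $\Phi^{-1}$ are mutually inverse bijections.
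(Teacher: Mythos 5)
Your proof is correct and is essentially the paper's: deleting the topmost bead of the $s$-abacus is exactly deleting the largest part $\lambda_1$, which is the map $(\lambda_1,\lambda_2,\ldots)\mapsto(\lambda_2,\ldots)$ that the paper writes down directly on partitions, so the two arguments differ only in that you carry out the verifications (core property, bound on the maximum position, spacing) on the abacus side. The one substantive point you add is the hypothesis $s-1\geq d$ needed for the inverse to produce a $d$-distinct abacus, and this is genuinely required: for $s\leq d$ the set $\mathcal{G}_{s,m,r}^d$ collapses to the empty partition alone, while $\mathcal{C}_{s,m-1,r}^d$ need not (e.g.\ for $(d,s,m,r)=(2,2,2,1)$ it contains $\emptyset$ and $(1)$), so the lemma implicitly assumes $s>d$ --- a point the paper's one-line proof does not address.
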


\begin{proof}
  The claimed bijective correspondence $\mathcal{G}_{s, m, r}^d \rightarrow
  \mathcal{C}_{s, m - 1, r}^d$ is given by the map
  \begin{equation*}
    (\lambda_1, \lambda_2, \lambda_3, \ldots) \mapsto (\lambda_2, \lambda_3,
     \ldots),
  \end{equation*}
  with the understanding that the empty partition is sent to itself.
\end{proof}

\begin{corollary}
  In particular, if $1 \leq r \leq d$ or $r = - 1$, we get an
  enumeration of $(s, m s + r)$-core partitions into $d$-distinct parts with
  maximal initial gap from Theorem~\ref{thm:cores:main}.
\end{corollary}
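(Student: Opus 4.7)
The plan is to combine the bijection from the preceding lemma with Theorem~\ref{thm:cores:main}, applied with $m$ replaced by $m-1$. The preceding lemma already establishes a natural bijection $\mathcal{G}_{s,m,r}^d \to \mathcal{C}_{s,m-1,r}^d$ given by deletion of the first part (with the empty partition fixed), so the entire content of the corollary is to observe that the right-hand side is the object enumerated by Theorem~\ref{thm:cores:main}.

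Concretely, I would first pass to generating polynomials that record the number of parts. Since each non-empty $\lambda \in \mathcal{G}_{s,m,r}^d$ maps to a partition with one fewer part and the empty partition is sent to itself, the bijection yields
$$\sum_{\lambda \in \mathcal{G}_{s,m,r}^d} q^{n(\lambda)} = 1 + q\bigl(\boldsymbol{C}_{s,m-1,r}^d(q) - 1\bigr).$$
Under the hypothesis $1 \leq r \leq d$ or $r = -1$, Theorem~\ref{thm:cores:main} provides the recurrence $f_s(q) = f_{s-1}(q) + (q + q^2 + \cdots + q^{m-1}) f_{s-d-1}(q)$ for $f_s(q) = \boldsymbol{C}_{s,m-1,r}^d(q)$ (with the appropriate lower bound on $s$), and Lemma~\ref{lem:abaci:initial} supplies initial conditions. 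Substituting into the displayed identity gives an explicit recursive enumeration of $\mathcal{G}_{s,m,r}^d$, both in size and refined by the number of parts. Specializing at $q=1$ recovers the plain count $|\mathcal{G}_{s,m,r}^d| = |\mathcal{C}_{s,m-1,r}^d|$.

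I do not anticipate any real obstacle: the corollary is a direct translation of the preceding lemma through Theorem~\ref{thm:cores:main}. The only point requiring a brief check is the degenerate case $m = 1$, where $\mathcal{C}_{s,0,r}^d$ consists only of the empty partition (the corresponding abacus has no room to place a non-first-row bead), which is consistent with the fact that a partition in $\mathcal{G}_{s,1,r}^d$ with a non-empty first part would require its hook lengths to fit into a single row of the $s$-abacus yet have a second part $\lambda_2 = \lambda_1 - (s-1)$, forcing $\lambda$ to be empty.
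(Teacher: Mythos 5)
Your main argument is exactly the paper's (implicit) one: the paper offers no separate proof of this corollary, since it is an immediate consequence of the preceding lemma's bijection $\mathcal{G}_{s,m,r}^d \to \mathcal{C}_{s,m-1,r}^d$ combined with Theorem~\ref{thm:cores:main} applied with $m-1$ in place of $m$, and your generating-polynomial identity $1 + q\bigl(\boldsymbol{C}_{s,m-1,r}^d(q)-1\bigr)$ is a correct way to record this.

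One small correction to your closing aside on the degenerate case $m=1$: it is not true that $\mathcal{C}_{s,0,r}^d$ consists only of the empty partition. The relevant hook-length bound (Lemma~\ref{lem:cores:maxhook}) is $< ms + r$, not $< ms$, so elements of $\mathcal{C}_{s,1,r}^d$ need not fit in a single row of the $s$-abacus. For instance, with $2 \leq r \leq d$ and $s \geq d+1$, the partition $(s,1)$ is $(s,s+2)$-core, $d$-distinct, and has initial gap $s-1$, so it lies in $\mathcal{G}_{s,1,2}^d$ and maps to $(1) \in \mathcal{C}_{s,0,2}^d$; more generally $\mathcal{C}_{s,0,r}^d = \{\emptyset,(1),(2),\ldots,(r-1)\}$, with generating polynomial $1+(r-1)q$ (consistent with Lemma~\ref{lem:abaci:initial} at $m=0$). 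This does not affect the corollary for $m \geq 2$, where Theorem~\ref{thm:cores:main} applies to $\boldsymbol{C}_{s,m-1,r}^d$; for $m=1$ the enumeration simply comes from this direct computation rather than from the recurrence.
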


\begin{example}
  The number of $(s, 3 s - 1)$-core partitions into $k$ distinct parts with
  maximal initial gap is $\binom{s - 1}{k - 1}$. This follows from the
  bijective correspondence between $\mathcal{G}_{s, 3, - 1}^1$ and
  $\mathcal{C}_{s, 2, - 1}^1$. Indeed, for the latter, as noted in
  \eqref{eq:C:m2:rm1}, the generating polynomial for the number of parts of
  $(s, 2 s - 1)$-core partitions into distinct parts is $\boldsymbol{C}_{s, 2, -
  1} (q) = (1 + q)^{s - 1}$. It was this example that initially motivated the
  present paper.
\end{example}

\section{Average number of parts}\label{sec:avg}

In this section, we indicate that our results on the enumeration of certain
$(s, t)$-core partitions can be used to obtain explicit formulas for the
averge number of parts of these partitions.

Determining the average size of core partitions has received considerable
attention in recent years. In particular, Johnson \cite{johnson-core} proved
the following result, which had been experimentally observed and conjectured
by Armstrong \cite[Conjecture~2.6]{ahj-cores} and which should be compared
with \eqref{eq:core:max}.

\begin{theorem}
  Suppose $s$ and $t$ are coprime. The average size of an $(s, t)$-core
  partition is
  \begin{equation}
    \frac{(s - 1) (t - 1) (s + t + 1)}{24} . \label{eq:core:avg}
  \end{equation}
\end{theorem}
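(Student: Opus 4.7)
The plan is to compute $\sum_{\lambda} |\lambda|$ over all $(s,t)$-cores $\lambda$ and divide by the count $\frac{1}{s+t}\binom{s+t}{s}$ from \eqref{eq:anderson}. My main tool would be Anderson's bijection between $(s,t)$-cores (for coprime $s,t$) and order ideals of the poset $F(s,t)$ of positive integers not expressible as $\alpha s + \beta t$ with $\alpha,\beta \geq 0$, under the partial order $x \preceq y \iff y - x \in \langle s,t\rangle_{\geq 0}$. As in the proof of Lemma~\ref{lem:maxnrparts:st}, the set $A(\lambda)$ of first-column hook lengths of $\lambda$ is precisely this order ideal, and a short computation (using that the parts of $\lambda$ are $h_i - (k-i)$ where $h_1 > \cdots > h_k$ are the first-column hook lengths) gives
\begin{equation*}
  |\lambda| \;=\; \sum_{a \in A(\lambda)} a \;-\; \binom{|A(\lambda)|}{2}.
\end{equation*}
Thus the task splits into evaluating the first-moment sum $S_1 = \sum_\lambda \sum_{a \in A(\lambda)} a$ and the second-moment sum $S_2 = \sum_\lambda \binom{|A(\lambda)|}{2}$.

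For $S_1$, I would swap summation to obtain $S_1 = \sum_{a \in F(s,t)} a \cdot N(a)$, where $N(a)$ counts the $(s,t)$-cores whose associated order ideal contains $a$. I would evaluate $N(a)$ using the order-reversing involution $x \mapsto (s-1)(t-1) - 1 - x$ of $F(s,t)$, which induces a bijection between order ideals containing $a$ and order filters avoiding its image; combined with the classical Brown--Shiue identity $\sum_{a \in F(s,t)} a = (s-1)(t-1)(2st - s - t - 1)/12$, this should yield $S_1$ in closed form in terms of $s$ and $t$.

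The main obstacle is $S_2$, equivalently $\sum_\lambda |A(\lambda)|^2$ up to a known linear term. Writing this as a count of triples $(\lambda; a, b)$ with $a \neq b$ both in $A(\lambda)$, the problem reduces to enumerating $(s,t)$-cores simultaneously containing two prescribed hook lengths, which is a lattice-point-counting problem on certain subpolytopes of the Anderson simplex that parametrizes cores. This is essentially the content of Armstrong's conjecture on the average size, resolved by Johnson \cite{johnson-core} through a careful polytope dissection and an application of Ehrhart reciprocity. A less conceptual but more accessible route would be to use the Anderson bijection to argue that $\sum_\lambda |\lambda|$ is a quasi-polynomial in $(s,t)$ of a priori bounded degree, and then identify it by interpolation from sufficiently many small coprime pairs $(s,t)$; securing the degree bound is itself delicate and requires tracking the polynomial nature of the Anderson parametrization carefully. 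Once both $S_1$ and $S_2$ are in hand, dividing $S_1 - S_2$ by $\frac{1}{s+t}\binom{s+t}{s}$ and simplifying the resulting rational expression should yield the advertised formula $(s-1)(t-1)(s+t+1)/24$.
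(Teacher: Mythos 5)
This theorem is not proved in the paper: it is Johnson's theorem \cite{johnson-core} (Armstrong's conjecture, see \cite{ahj-cores}), quoted in Section~\ref{sec:avg} purely for context, so there is no in-paper argument to compare against; your attempt must therefore stand on its own, and it does not. The first concrete problem is the tool you propose for the first-moment sum $S_1$: the map $x \mapsto (s-1)(t-1)-1-x = st-s-t-x$ is \emph{not} an involution of $F(s,t)$, since for $0 \leq x \leq st-s-t$ exactly one of $x$ and $st-s-t-x$ is representable, so this map sends nonrepresentable integers to representable ones; moreover $F(s,t)$ admits no order-reversing involution in general (for $(s,t)=(3,4)$ the poset $\{1,2,5\}$ has two minimal elements and one maximal element). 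Consequently $N(a)$, the number of order ideals of $F(s,t)$ containing a prescribed element $a$, is never actually computed, and the Brown--Shiue evaluation of $\sum_{a \in F(s,t)} a$ by itself does not determine $S_1$.

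The second and more serious gap is the second-moment sum $S_2$, which you correctly identify as the crux and then do not address: deferring it to ``the content of Armstrong's conjecture, resolved by Johnson'' is circular, since that is precisely the statement to be proved. The fallback interpolation route is likewise unsubstantiated: establishing that $\sum_\lambda |\lambda|$ is (quasi-)polynomial in $(s,t)$ of bounded degree is exactly where the real work lies (Johnson's polytope dissection and Ehrhart reciprocity, or some equivalent machinery), and nothing in your proposal replaces it. What does survive is the correct reduction $|\lambda| = \sum_{a \in A(\lambda)} a - \binom{|A(\lambda)|}{2}$, consistent with the identification of $A(\lambda)$ used in the proof of Lemma~\ref{lem:maxnrparts:st}, together with the division by the count \eqref{eq:anderson}; but everything beyond that reduction is either incorrect (the involution) or an appeal to the theorem itself, so the proposal does not constitute a proof of \eqref{eq:core:avg}.
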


The case $(s, t) = (s, s + 1)$ had been established by Stanley and Zanello
\cite{sz-core}. On the other hand, employing results by Ford, Mai and Sze
\cite{fms-cores} on self-conjugate core partitions, Chen, Huang and Wang
\cite{chw-core} showed that the average size of a self-conjugate $(s,
t)$-core partitions is also given by \eqref{eq:core:avg}.

Xiong \cite{xiong-cores} proved the following result for $(s, s + 1)$-core
partitions into distinct parts, which had been conjectured by Amdeberhan
\cite{amdeberhan-conj}.

\begin{theorem}
  The total sum of the sizes of $(s, s + 1)$-core partitions into distinct
  parts is
  \begin{equation}
    \sum_{\substack{
      i + j + k = s + 1\\
      i, j, k \geq 1
    }} F{}_i F_j F_k . \label{eq:core:fib:avg}
  \end{equation}
\end{theorem}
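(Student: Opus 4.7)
The plan is to show that the total sum of sizes $S_s := \sum_\lambda |\lambda|$ (summed over $(s, s+1)$-core partitions into distinct parts) and the claimed value $T_s := \sum_{i+j+k = s+1,\, i,j,k \geq 1} F_i F_j F_k$ both satisfy the same recursion $X_s = X_{s-1} + X_{s-2} + A_s$, where $A_s := \sum_{i+j = s,\, i,j \geq 1} F_i F_j$, with matching initial data $X_1 = 0$ and $X_2 = 1$.

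The first step is to extract a recursion for $S_s$ from the bijection underlying the proof of Theorem~\ref{thm:abaci:rec}, specialized to $d = m = r = 1$. Here $\mathcal{C}_{s,1,1}^1$ is in bijection with subsets $S \subseteq \{1, \ldots, s-1\}$ having no two consecutive elements, and the associated partition $\lambda$ satisfies $|\lambda| = \sum_{h \in S} h - \binom{|S|}{2}$. Case~1 of the bijection ($s - 1 \notin S$) is the identity embedding $\mathcal{C}_{s-1,1,1}^1 \hookrightarrow \mathcal{C}_{s,1,1}^1$ and preserves size. In Case~2 ($s - 1 \in S$, and hence $s - 2 \notin S$) the bead at position $s - 1$ corresponds to the largest part of $\lambda$, equal to $s - n(\lambda)$; thus for the image $\lambda' \in \mathcal{C}_{s-2,1,1}^1$ we get $|\lambda| = |\lambda'| + s - 1 - n(\lambda')$. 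Summing over both cases yields
\begin{equation*}
  S_s = S_{s-1} + S_{s-2} + (s - 1)\, N_{s-2} - K_{s-2},
\end{equation*}
where $N_s = |\mathcal{C}_{s,1,1}^1| = F_{s+1}$ by Theorem~\ref{thm:cores:ns} and $K_s$ denotes the total number of parts of all partitions in $\mathcal{C}_{s,1,1}^1$.

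Next I would evaluate $K_s$ combinatorially. Counting pairs $(S, i)$ with $i \in S$, after fixing $i$ the set $S \setminus \{i\}$ splits into independent non-consecutive subsets of $\{1, \ldots, i-2\}$ and of $\{i+2, \ldots, s-1\}$, giving $F_i$ and $F_{s-i}$ choices respectively. Hence $K_s = \sum_{i=1}^{s-1} F_i F_{s-i} = A_s$. Combined with the Fibonacci identity $A_s + A_{s-2} = (s-1) F_{s-1}$, which is immediate from $(1 + x^2) F(x)^2 = x^2 F'(x)$ for $F(x) = x/(1 - x - x^2)$, the recursion above simplifies to $S_s = S_{s-1} + S_{s-2} + A_s$ for $s \geq 2$.

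Finally, since $\sum_{s \geq 2} T_s x^{s+1} = F(x)^3$, the generating function $T(x) := \sum_{s \geq 1} T_s x^s$ equals $x^2 / (1 - x - x^2)^3$, and multiplying by $1 - x - x^2$ gives $(1 - x - x^2) T(x) = F(x)^2$, i.e.\ $T_s = T_{s-1} + T_{s-2} + A_s$ for $s \geq 2$. Together with $S_1 = T_1 = 0$ and $S_2 = T_2 = 1$, this forces $S_s = T_s$ for all $s$. The main obstacle will be the size-tracking in Case~2 of the first step: correctly identifying the deleted bead with the largest part of $\lambda$ and obtaining the formula $|\lambda| = |\lambda'| + s - 1 - n(\lambda')$. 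It is the dependence on $n(\lambda')$ here that introduces $K_{s-2}$ and ultimately forces the Fibonacci convolution $A_s$ into the recursion, so that the role of the cube $F(x)^3$ becomes visible only once the two recursions are compared.
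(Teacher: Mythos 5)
Your argument is correct, but there is no in-paper proof to compare it against: the paper quotes this theorem from Xiong \cite{xiong-cores} (it was conjectured by Amdeberhan \cite{amdeberhan-conj}) and never proves it. What you have effectively done is extend the paper's own abacus methodology, which in Section~\ref{sec:avg} is only used to track the \emph{number} of parts, so that it tracks \emph{sizes} in the special case $(s,s+1)$. The details check out: by Lemma~\ref{lem:cores:abaci:smallr} with $d=r=1$ (worth citing explicitly), $\mathcal{C}_{s,1,1}$ is identified with non-consecutive subsets $S\subseteq\{1,\dots,s-1\}$ of first column hook lengths, and $|\lambda|=\sum_{h\in S}h-\binom{|S|}{2}$; in your Case~2 the partition $\lambda$ is exactly $\lambda'$ with a new largest part $s-1-n(\lambda')$ prepended, so $|\lambda|=|\lambda'|+s-1-n(\lambda')$ and hence $S_s=S_{s-1}+S_{s-2}+(s-1)F_{s-1}-K_{s-2}$. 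Your evaluation $K_s=\sum_{i+j=s}F_iF_j$ is correct and is in fact precisely the paper's own formula \eqref{eq:core:fib:avg:parts}, so you could cite that corollary instead of recounting pairs $(S,i)$; the identity $A_s+A_{s-2}=(s-1)F_{s-1}$ via $(1+x^2)F(x)^2=x^2F'(x)$ and the computation $(1-x-x^2)T(x)=F(x)^2$ are both right, and the initial values $S_1=T_1=0$, $S_2=T_2=1$ match, forcing $S_s=T_s$. One cosmetic point: state the common recursion for $s\ge 3$ rather than $s\ge 2$, since the latter silently uses $X_0=0$, which is not part of your stated initial data (alternatively, include $X_0=0$, which is true on both sides). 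The payoff of your route, compared with simply citing \cite{xiong-cores}, is that it shows the size statistic, which the paper's Conclusion flags as difficult to incorporate in general, can be handled by the same column-removal recursion in this particular case, precisely because the size shift in Case~2 depends only on $s$ and the number of parts, a statistic the paper already controls.
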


Note that the total sum \eqref{eq:core:fib:avg}, divided by the number $F_{s +
1}$ of such partitions, gives the average size of $(s, s + 1)$-core partitions
into distinct parts. Also note that, by general properties of constant
recursive sequences, the convolution sum \eqref{eq:core:fib:avg} of Fibonacci
numbers can expressed in the simpler but possibly less illuminating form
$\frac{1}{50} ((5 s + 7) s F_{s + 1} - 6 (s + 1) F_s)$. This result was
generalized by Zaleski \cite{zaleski-core} to higher moments of the sizes of
$(s, s + 1)$-core partitions into distinct parts. In subsequent work, Zaleski
\cite{zaleski-moments-d} also considered moments of the sizes of $(s, m s -
1)$-core partition into distinct parts. For recent results on the largest
sizes of $(s, m s \pm 1)$-core partitions into distinct parts, we refer to
\cite{xiong-max}.

As in Theorem~\ref{thm:cores:main}, let $f_s (q) =\boldsymbol{C}_{s, m, r}^d
(q)$ denote the generating polynomial for the number of parts of $(s, m s +
r)$-core partitions into $d$-distinct parts. Clearly, $n_s = f_s (1)$ is the
number of these partitions. On the other hand, note that $p_s = f_s' (1)$ is
the total number of parts of these partitions. We therefore obtain the
following result as a consequence of Theorem~\ref{thm:cores:main}.

\begin{corollary}
  Suppose the conditions of Theorem~\ref{thm:cores:main} hold. Then $p_s$, the
  total sum of the numbers of parts of $(s, m s + r)$-core partitions into
  $d$-distinct parts, satisfies
  \begin{equation*}
    p_s = p_{s - 1} + m p_{s - d - 1} + \binom{m + 1}{2} n_{s - d - 1} .
  \end{equation*}
\end{corollary}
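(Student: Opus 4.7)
The plan is to derive the claimed recursion by differentiating the polynomial recursion of Theorem~\ref{thm:cores:main} with respect to $q$ and then evaluating at $q = 1$. Since $f_s(q) = \boldsymbol{C}_{s,m,r}^d(q)$ is a polynomial generating the number of parts, $n_s = f_s(1)$ counts the partitions in $\mathcal{C}_{s,m,r}^d$ and $p_s = f_s'(1)$ is, by the standard interpretation of evaluating a derivative of a generating polynomial at $1$, precisely the total number of parts summed over all partitions in $\mathcal{C}_{s,m,r}^d$.

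First I would write the recursion from Theorem~\ref{thm:cores:main} as
\begin{equation*}
  f_s(q) = f_{s-1}(q) + (q + q^2 + \cdots + q^m)\, f_{s-d-1}(q),
\end{equation*}
which is valid under the stated hypotheses. Differentiating the product on the right with the product rule gives
\begin{equation*}
  f_s'(q) = f_{s-1}'(q) + (1 + 2q + \cdots + m q^{m-1})\, f_{s-d-1}(q) + (q + q^2 + \cdots + q^m)\, f_{s-d-1}'(q).
\end{equation*}

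Next I would substitute $q = 1$. The factor $q + q^2 + \cdots + q^m$ evaluates to $m$, and the derivative factor $1 + 2q + \cdots + m q^{m-1}$ evaluates to $1 + 2 + \cdots + m = \binom{m+1}{2}$. Together with $p_s = f_s'(1)$ and $n_{s-d-1} = f_{s-d-1}(1)$, this immediately yields
\begin{equation*}
  p_s = p_{s-1} + \binom{m+1}{2} n_{s-d-1} + m\, p_{s-d-1},
\end{equation*}
which is the claimed identity. There is no genuine obstacle here; the entire content is the product rule applied to the previously established polynomial recursion, so the proof reduces to these two lines of computation.
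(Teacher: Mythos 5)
Your proposal is correct and matches the paper's intent exactly: the corollary is obtained by differentiating the recursion of Theorem~\ref{thm:cores:main} and setting $q = 1$, with $n_s = f_s(1)$, $p_s = f_s'(1)$, $q + \cdots + q^m$ evaluating to $m$, and its derivative to $\binom{m+1}{2}$. The paper leaves these two lines implicit, so your write-up simply makes the same argument explicit.
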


Note that $p_s / n_s$ is the average number of parts of these partitions.
Since the $n_s$ are constant recursive by Theorem~\ref{thm:cores:main}, it
follows that the $p_s$ are constant recursive as well. We illustrate this in
the particularly pleasing cases of $(s, s + 1)$-core partitions into distinct
parts (in which case $n_s = F_{s + 1}$) as well as $(s, 2 s - 1)$-core
partitions into distinct parts (in which case $n_s = 2^{s - 1}$, as noted in
Example~\ref{eg:C:m2:rm1}).

\begin{corollary}
  The total sum of the numbers of parts of $(s, s + 1)$-core partitions into
  distinct parts is
  \begin{equation}
    \sum_{\substack{
      i + j = s\\
      i, j \geq 1
    }} F{}_i F_j = \frac{1}{5} (2 s F_{s + 1} - (s + 1) F_s) .
    \label{eq:core:fib:avg:parts}
  \end{equation}
\end{corollary}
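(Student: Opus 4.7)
The plan is to apply the preceding corollary with parameters $(d, m, r) = (1, 1, 1)$ and then match the resulting recursion against two expressions: the Fibonacci convolution on the left of \eqref{eq:core:fib:avg:parts} and the closed form on the right. In this specialization, the binomial coefficient becomes $\binom{2}{2} = 1$ and the recurrence reads $p_s = p_{s-1} + p_{s-2} + n_{s-2}$, where, as noted after Theorem~\ref{thm:cores:ns}, $n_s = C_{s,1}^+ = F_{s+1}$ are the Fibonacci numbers. Thus the master recurrence to be solved is
\begin{equation*}
p_s = p_{s-1} + p_{s-2} + F_{s-1}, \qquad p_1 = 0, \quad p_2 = 1,
\end{equation*}
the initial conditions being obtained by inspection of the $(1,2)$- and $(2,3)$-core partitions into distinct parts.

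Next, I would verify that the convolution $q_s \assign \sum_{i+j=s,\,i,j\geq 1} F_i F_j$ satisfies the same recurrence. Splitting off the $i = s-1$ term and applying the Fibonacci recursion $F_{s-i} = F_{s-1-i} + F_{s-2-i}$ to the remaining summands yields, using $F_0 = 0$, exactly $q_s = F_{s-1} + q_{s-1} + q_{s-2}$. Checking $q_1 = 0$ and $q_2 = F_1 F_1 = 1$, we obtain $p_s = q_s$, which is the first equality in \eqref{eq:core:fib:avg:parts}. Equivalently, at the level of generating functions this amounts to the observation that $(1-x-x^2) \sum_{s \geq 0} p_s x^s = x^2/(1-x-x^2)$, so $\sum p_s x^s = \bigl(x/(1-x-x^2)\bigr)^2 = \bigl(\sum_{j\geq 1} F_j x^j\bigr)^2$.

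For the closed form $r_s \assign \tfrac{1}{5}(2sF_{s+1} - (s+1)F_s)$, I would simply verify that $r_s$ satisfies the same recurrence with the same initial conditions. The initial values $r_1 = 0$ and $r_2 = 1$ are immediate. For the recurrence, one computes
\begin{equation*}
r_{s-1} + r_{s-2} = \tfrac{1}{5}\bigl[(s-1) F_s + (2s-5) F_{s-1}\bigr]
\end{equation*}
after using $F_{s-2} = F_s - F_{s-1}$ to consolidate terms, and then
\begin{equation*}
r_s - r_{s-1} - r_{s-2} = \tfrac{1}{5}\bigl[2s(F_{s+1} - F_s) - (2s-5) F_{s-1}\bigr] = \tfrac{1}{5}\bigl[2s F_{s-1} - (2s-5) F_{s-1}\bigr] = F_{s-1},
\end{equation*}
again using the Fibonacci recurrence $F_{s+1} - F_s = F_{s-1}$. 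This establishes the second equality in \eqref{eq:core:fib:avg:parts}.

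There is no real obstacle here; the whole argument is a routine reduction of the corollary preceding it to a linear recurrence with a Fibonacci inhomogeneity, followed by two symbolic verifications. The only mildly delicate point is being careful with the index ranges in the convolution sum (particularly the treatment of the boundary $i = s-1$ and the use of $F_0 = 0$) when checking that $q_s$ satisfies the claimed recurrence.
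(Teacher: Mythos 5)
Your proposal is correct and follows essentially the same route the paper intends: specialize the preceding corollary to $(d,m,r)=(1,1,1)$ to get $p_s=p_{s-1}+p_{s-2}+F_{s-1}$ with $n_s=F_{s+1}$, then check that both the Fibonacci convolution and the closed form $\tfrac{1}{5}(2sF_{s+1}-(s+1)F_s)$ satisfy this recurrence with the initial values $p_1=0$, $p_2=1$ (all your verifications check out; only the attribution is slightly off, since the remark after Theorem~\ref{thm:cores:ns} concerns $C_{s,1}^-=F_s$, while $C_{s,1}^+=F_{s+1}$ follows from its recurrence and initial conditions, as noted in Section~\ref{sec:avg}).
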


Observe the surprising similarity of \eqref{eq:core:fib:avg:parts} with
\eqref{eq:core:fib:avg}, the total sum of the sizes of $(s, s + 1)$-core
partitions into distinct parts.

\begin{corollary}
  The total sum of the numbers of parts of $(s, 2 s - 1)$-core partitions into
  distinct parts is $(s - 1) 2^{s - 2}$. The number of these partitions is
  $2^{s - 1}$.
\end{corollary}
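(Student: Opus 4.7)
The plan is to read off both quantities directly from the explicit generating polynomial established in Example~\ref{eg:C:m2:rm1}. Recall that
\begin{equation*}
  \boldsymbol{C}_{s, 2, -1}(q) = (1+q)^{s-1},
\end{equation*}
which was derived there as a consequence of Theorem~\ref{thm:cores:main} applied with $m = 2$, $d = 1$, $r = -1$ (together with the relevant initial conditions from Lemma~\ref{lem:abaci:initial}).

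The second statement of the corollary is then immediate: the number of $(s, 2s-1)$-core partitions into distinct parts is
\begin{equation*}
  n_s = \boldsymbol{C}_{s, 2, -1}(1) = 2^{s-1}.
\end{equation*}
For the first statement, I would use the observation (made just before the statement of the corollary) that the total number of parts across all partitions counted by $\boldsymbol{C}_{s,2,-1}(q)$ equals $p_s = \boldsymbol{C}_{s,2,-1}'(1)$. Differentiating the explicit formula yields
\begin{equation*}
  \boldsymbol{C}_{s, 2, -1}'(q) = (s-1)(1+q)^{s-2},
\end{equation*}
and setting $q = 1$ gives $p_s = (s-1) \cdot 2^{s-2}$, as claimed.

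There is really no obstacle here: the only non-trivial ingredient is the closed form for $\boldsymbol{C}_{s,2,-1}(q)$, which has already been justified in Example~\ref{eg:C:m2:rm1}. As a sanity check, one could instead verify the formula by induction using the previous corollary's recursion with $m = 2$ and $d = 1$, namely $p_s = p_{s-1} + 2 p_{s-2} + 3 n_{s-2}$ together with $n_s = 2^{s-1}$, but the direct differentiation is cleaner and conceptually more transparent.
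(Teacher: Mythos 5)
Your proposal is correct and matches the paper's intended argument: the paper sets up $p_s = f_s'(1)$ and points to the closed form $\boldsymbol{C}_{s,2,-1}(q) = (1+q)^{s-1}$ from Example~\ref{eg:C:m2:rm1}, from which both $n_s = 2^{s-1}$ and $p_s = (s-1)2^{s-2}$ follow exactly as you compute. Your cross-check via the recursion $p_s = p_{s-1} + 2p_{s-2} + 3n_{s-2}$ is also consistent and is the route the surrounding text alludes to, so there is nothing missing.
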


Likewise, the higher moments for the number of parts of $(s, m s + r)$-core
partitions into $d$-distinct parts can be obtained from higher derivatives of
the generating polynomial $f_s (q) =\boldsymbol{C}_{s, m, r}^d (q)$.

\section{Conclusion}

Under certain restrictions on $s$ and $t$, we have determined the generating
polynomials $f_{s, t} (q)$ for $(s, t)$-core partitions into $d$-distinct
parts, where $q$ keeps track of the number of parts. It would be desirable,
but appears difficult, to prove enumeration results for general $s$ and $t$.
It would also be desirable, but again appears difficult, to be able to
incorporate the sizes of the partitions in the generating polynomials. More
generally, it is natural to ask whether we can include additional statistics
(besides the number of parts) to the enumeration results for core partitions
(see, for instance, the final section of \cite{paramonov-core} for a
discussion of including the bounce statistic on $(s, m s + 1)$-core
partitions).

The natural question whether Anderson's \cite{anderson-cores} result
\eqref{eq:anderson} that the number of $(s, t)$-core partitions is given by
the generalized Catalan numbers can be similarly extended to keep track of
additional statistics, like the number of parts, leads to intriguing and
surprisingly difficult open problems. For instance, it is an open problem
(proposed by Dennis Stanton; see \cite{ahj-cores}) to find a statistic
$\operatorname{stat} (\lambda)$ on $(s, t)$-core partitions $\lambda$ such that, for
coprime $s$ and $t$,
\begin{equation}
  \sum_{\lambda} q^{\operatorname{stat} (\lambda)} = \frac{1}{[s + t]_q} \binom{s +
  t}{s}_q, \label{eq:anderson:q}
\end{equation}
where the sum is over all $(s, t)$-core partitions $\lambda$ and where the
right-hand side are the usual $q$-analogs. Interestingly, Armstrong, Hanusa
and Jones \cite[Conjecture~2.8]{ahj-cores} provide a conjectural candidate
for such a statistic, which is the sum of $n (\lambda)$, the number of parts
of $\lambda$, plus a second (nonnegative) statistic $m (\lambda)$, the skew
length of $\lambda$. Anderson established \eqref{eq:anderson} by exhibiting a
bijection between the set of $(s, t)$-core partitions and the set of $(s, t)$
Dyck paths (lattice paths from one corner of a $s \times t$ rectangle to the
opposite corner, which stay above the diagonal connecting these corners). In
particular, statistics on core partitions can be obtained from statistics on
Dyck paths. As an indication that proving \eqref{eq:anderson:q}, a $q$-analog
of Anderson's enumeration \eqref{eq:anderson}, is a difficult problem, we
mention that even the nonnegativity of the coefficients of the generalized
$q$-Catalan numbers on the right-hand side of \eqref{eq:anderson:q} is
nontrivial and that no elementary proof is known. On the other hand, in the
words of \cite{ahj-cores}, the conjecture that \eqref{eq:anderson:q} holds
for $\operatorname{stat} (\lambda) = n (\lambda) + m (\lambda)$ is ``just a shadow
from the more general subject of $q, t$-Catalan combinatorics'' (below, we
will use the letter $x$ in place of $t$). A particularly appealing general
conjecture is that
\begin{equation}
  \sum_{\lambda} q^{n (\lambda)} x^{m (\lambda)} = \sum_{\lambda} q^{m
  (\lambda)} x^{n (\lambda)}, \label{eq:catalan:qt:sym}
\end{equation}
where both sums are over $(s, t)$-core partitions $\lambda$ for coprime $s$
and $t$. Here, the left-hand side defines the $q, x$-Catalan numbers. For more
details and further references, we refer to \cite{ahj-cores}, as well as
\cite{gorsky-mazin-catalan-2} where special cases of the symmetry
\eqref{eq:catalan:qt:sym} are proved.

As noted in Example~\ref{eg:C:m2:rm1}, it follows from $\boldsymbol{C}_{s, 2, -
1} (q) = (1 + q)^{s - 1}$ that the number of $(s, 2 s - 1)$-core partitions
into $k$ distinct parts is $\binom{s - 1}{k}$. Less generally, as observed in
\cite{s-corepartitions}, there are $2^{s - 1}$ many $(s, 2 s - 1)$-core
partitions into distinct parts. On the other hand, Yan, Qin, Jin, Zhou
\cite{yqjz-core}, Zaleski, Zeilberger \cite{zz-core}, Baek, Nam, Yu
\cite{bny-core}, and Paramonov \cite{paramonov-core} show that, for odd
$s$, the number of $(s, s + 2)$-core partitions into distinct parts is $2^{s -
1}$ as well. In our notation~\eqref{eq:C:def},
\begin{equation}
  \boldsymbol{C}_{s, 1, 2} (1) = 2^{s - 1} . \label{eq:C:2s}
\end{equation}
It would be interesting to determine an explicit formula for $\boldsymbol{C}_{s,
1, 2} (q) = \sum c_n (s) q^n$, a $q$-analog of \eqref{eq:C:2s}, and thus
obtain the number of such partitions into $k$ parts. Limited numerical data
suggests that $c_n (s)$, for $s \geq n$, is a polynomial in $s$ of degree
$n$.

\begin{acknowledgements}
We thank Huan Xiong for sending a preprint of
\cite{xiong-max} and for mentioning Zaleski's conjectures
\cite{zaleski-moments-d} on the moments of the size of an $(n, d n -
1)$-core partition into distinct parts.
\end{acknowledgements}

% \bibliography{../../math}
% \bibliographystyle{abbrv}

\end{document}